\documentclass{article}

\usepackage[numbers]{natbib} 
\usepackage{color}
\usepackage{amsfonts,amssymb,amsopn,amsmath,mathrsfs,theorem}
\usepackage{hyperref} 
\usepackage{bbm}	  
\usepackage{graphicx} 
\usepackage{verbatim} 
\usepackage[utf8]{inputenc} 
\usepackage{authblk}

\newcounter{subsection1}[section]
\setcounter{subsection1}{0} 
\numberwithin{subsection1}{section}

\newtheorem{assump}[subsection1]{Assumption}
\newtheorem{defn}[subsection1]{Definition}
\newtheorem{prop}[subsection1]{Proposition}
\newtheorem{lemma1}[subsection1]{Lemma}
\newtheorem{corollary1}[subsection1]{Corollary}
\newtheorem{remark1}[subsection1]{Remark}
\newtheorem{theorem1}[subsection1]{Theorem}


\newcounter{assumptions}
\setcounter{assumptions}{0}

\newenvironment{proof}{\vspace{1ex}\noindent{\textsc{Proof:}}\hspace{0.5em}}{\hfill\qed\vspace{1ex}}
\newenvironment{ack}{\medskip\noindent\textbf{Acknowledgements}\;}{}

\numberwithin{equation}{section} 
\numberwithin{subsection1}{section}

\pdfpagewidth 8.2in 
\pdfpageheight 11.5in 
\setlength\topmargin{-0.3in} 
\setlength\oddsidemargin{0in}
\setlength\evensidemargin{0in}
\setlength\textheight{8.7in} 
\setlength\textwidth{6.3in} 


\begin{document}

\newcommand{\ra}{\Rightarrow} 
\newcommand{\sw}{\subseteq} 
\newcommand{\mc}{\mathcal} 
\newcommand{\mb}{\mathbb} 
\newcommand{\p}{\partial} 
\newcommand{\E}{\mb{E}} 
\newcommand{\Prob}{\mb{P}}
\newcommand{\R}{\mb{R}} 
\newcommand{\C}{\mb{C}} 
\newcommand{\N}{\mb{N}}
\newcommand{\Q}{\mb{Q}}
\newcommand{\Z}{\mb{Z}}
\newcommand{\B}{\mb{B}}
\newcommand{\set}{\,;\,} 
\newcommand{\com}{\leftrightarrow}
\newcommand{\given}{\,|\,} 
\newcommand{\li}{\langle}
\newcommand{\ri}{\rangle}
\newcommand{\wt}{\widetilde}
\newcommand{\1}{\mathbbm{1}}
\newcommand{\cadlag}{c\`{a}dl\`{a}g}

\def\l{\left}
\def\r{\right}
\def\P{\mb{P}}
\def\qed{$\blacksquare$}
\def\vec{\mathbf}

\def\highlight{}

\fontsize{10.4pt}{13pt}
\selectfont

\author[1,2]{Edward Crane\thanks{edward.crane@bristol.ac.uk}}
\author[1,2]{Nic Freeman\thanks{nicfreeman1209@gmail.com}}
\author[2,3]{B\'{a}lint T\'{o}th\thanks{balint.toth@bristol.ac.uk}}

\affil[1]{Heilbronn Institute for Mathematical Research, University of Bristol}
\affil[2]{School of Mathematics, University of Bristol}
\affil[3]{MTA-BME Stochastics Research Group and R\'{e}nyi Institute, Budapest}

\title{Cluster growth in the dynamical Erd\H{o}s-R\'{e}nyi process with forest fires}
\date{\today}

\maketitle

\begin{abstract}

We investigate the growth of clusters within the forest fire model of \citet{RT2009}. The model is a continuous-time Markov process, similar to the dynamical Erd\H{o}s-R\'{e}nyi random graph but with the addition of so-called \textit{fires}. A vertex may catch fire at any moment and, when it does so, causes all edges within its connected cluster to burn, meaning that they instantaneously disappear. Each burned edge may later reappear.

We give a precise description of the process $C_t$ of the size of the cluster of a tagged vertex, in the limit as the number of vertices in the model tends to infinity. We show that $C_t$ is an explosive branching process with a time-inhomogeneous offspring distribution and instantaneous return to $1$ on each explosion. \highlight{Additionally, we show that the characteristic curves used to analyse the Smoluchowski-type coagulation equations associated to the model have a probabilistic interpretation in terms of the process $C_t$.}


\end{abstract}

\allowdisplaybreaks

\section{Introduction}
\label{introsec}

{\highlight
Forest fire models are stochastic interacting particle systems in which the vertices or edges of a graph are gradually switched on, forming growing connected clusters. This growth is counterbalanced by so-called fires; each fire involves the rapid destruction of a single cluster by the switching off of its edges or vertices. Each fire is caused by the random spontaneous ignition of a single vertex, which we will call a lightning strike. The lightning strikes occur independently of the state of the system and are typically taken to be rare events so that on average fires are large. 
}

The evolution of a forest fire model is thus controlled by two competing forces, one that causes clusters to grow slowly and another that causes clusters to burn suddenly. One consequence is that a regime may exist in which the system exhibits self-organized criticality. This means that it is {\highlight driven by its own dynamics} towards a stationary state in which these two opposing forces are precisely balanced. In this state clusters may grow very large before they burn, typified by a heavy-tailed distribution of cluster sizes. Note that the term `self-organized criticality' is a heuristic description of a model's behaviour, rather than any  specific criterion. See \citet{P2012} for a wide ranging discussion of self-organized criticality.

{\highlight
The existence of self-organized criticality in a forest fire model with lightning strikes has been predicted on the lattice $\Z^d$ by \citet{DS1992}. 
Recently, \citet{RT2009} introduced a closely related model, on the complete graph, for which they were able to prove that self-organized criticality occurs in the limit of large system size. It is this model that we study in the present paper; we refer to it as the \textit{Erd\H{o}s-R\'{e}nyi forest fire model}. In both models it is generally accepted that some form of self-organized criticality occurs when the system size tends to infinity and the rate per site at which lightning strikes occur tends slowly to $0$. 
}

The results of \citet{RT2009} are concerned with the limiting behaviour of $v_l(t)$, the fraction of vertices that belong to clusters of size $l\in\N$ at time $t$. Their analysis is based on the important observation that the $v_l(t)$ can be combined into an appropriate generating function $V(t,z)$ which then (in the limit) satisfies a Burgers control problem. {\highlight In this article we paint a further level of detail into the limiting picture; we study the evolution of the size of the cluster of a tagged vertex chosen uniformly at random. We determine the limit of this process as the system size tends to infinity. We show that the limit is an explosive branching process with a time-inhomogeneous offspring distribution and instantaneous return to $1$ on each explosion. Thus in the limit the cluster of our tagged vertex burns at the moment that it becomes infinite.}

We describe the Erd\H{o}s-R\'{e}nyi forest fire model in detail, along with our own results, in Sections \ref{mfffsec} and \ref{cgrowthsec}. We will discuss connections between our own model and other models in the mathematical forest fires literature in Section \ref{litsec}.

\subsection{The Erd\H{o}s-R\'{e}nyi Forest Fire Model}
\label{mfffsec}

We now describe the Erd\H{o}s-R\'{e}nyi forest fire model $(\mc{Z}_t^n)_{t\geq 0}$ introduced in \cite{RT2009}. Let $n\in\N$ and consider $[n]=\{1,2,\ldots,n\}$ as a set of $n$ labelled vertices. At time $t\in[0,\infty)$ the state of the model is described by a multigraph $\mc{Z}^n_t$ with vertex set $[n]$ and unoriented edges; we permit parallel edges and loops. The \textit{cluster} of vertex $k\in [n]$ at time $t$, written $\mc{C}^n_t(k)$, is the connected component of $\mc{Z}^n_t$ containing vertex $k$, i.e. the set of $j\in[n]$ such that there is a path along edges of $\mc{Z}^n_t$ from $k$ to $j$.

Given some (deterministic) initial condition the process $(\mc{Z}^n_t)$ evolves with the following dynamics: 
\begin{itemize}
\item Each unordered pair $(j,k)$ carries a \textit{growth clock} which rings at rate $\frac{1}{n}$. When the growth clock for $(j,k)$ rings we add an edge joining $j$ to $k$ (recall that we permit parallel edges and loops).
\item Each vertex carries a \textit{fire clock} which rings with rate $\lambda_n$ where $\lambda_n\in(0,\infty)$. When this fire clock of vertex $k$ rings, the cluster of $k$ is \textit{burned}: all edges between pairs of vertices in $\mc{C}^n_t(k)$ are instantaneously removed.
\end{itemize}
The growth and fire clocks of distinct edges and vertices are mutually independent. For technical reasons detailed in \cite{RT2009} the process $t\mapsto \mc{Z}^n_t$ is taken to be left-continuous with right limits. 
Consequently it is Markov with respect to the filtration $\mc{F}^n_t=\sigma(\mc{Z}_s\set s\leq t)$.

For each $l=1,\ldots,n$ we define
\begin{equation}\label{vndef}
v^n_l(t)=\frac{1}{n}\big|\{k\in[n]\set|\mc{C}_t(k)|=l\}\big|
\end{equation}
to be the fraction of vertices in $[n]$ that are in a cluster of size $l$ at time $t$. We will think of each vertex as having mass $1/n$, so that the total mass in the system is $1$ and $v_l^n$ is the proportion of mass in clusters of size $l$. 

The effect of the fires results in four different phases of behaviour, as identified in \cite{RT2009}. We restrict our attention to only one (the most interesting) of these phases, where the lightning occurs sufficiently often to prevent the formation of a giant component but also sufficiently rarely that clusters of any fixed finite size are not burned in the limit as $n \to \infty$. The phase in which this occurs is defined by the following assumption.

\begin{assump}\label{crit}
As $n\to\infty$, $\lambda_n\to 0$ and $n\lambda_n\to\infty$.
\end{assump}

Under Assumption \ref{crit}, as $n\to\infty$ a cluster of any constant size $k$ will see a fire at rate $k\lambda_n\to 0$; in other words not at all. However, a cluster which grows to be of size around $\frac{1}{\lambda_n}$ will see lightning at a non-negligible rate. In the process $\mc{Z}^n$, a cluster of size $k\in\N$ and a (distinct) cluster of size $j\in\N$ join together at rate $\frac{kj}{n}$ to form a cluster of size $k+j$. It follows that for each fixed $k$, as $n\to\infty$, we expect $v_k^n(t)$ to see an inflow of mass at rate approximately $\frac{k}{2}\sum_{l=1}^{k-1}v_l^n(t)v_{k-l}^n(t)$ and an outflow at rate approximately $k v_k^n(t)$. The approximations here neglect growth clocks of edges joining vertices within the same cluster of size $k$, and lightning strikes causing clusters of size $k$ to burn, both of which are negligible in the limit as $n \to \infty$ and $\lambda_n \to 0$. In our main result, Theorem \ref{CntoC}, we will exploit these observations, combined with Theorem \ref{TRthmupg} (which improves on the main result of \cite{RT2009} and gives a global description of the behaviour of $\mc{Z}^n$ as $n\to\infty$), to describe the evolution of the size of the cluster of a tagged vertex.

To understand how $v^n_l(t)$ behaves as $n\to\infty$ it is sensible first to examine the simpler case $\lambda_n=0$ (i.e.~no fires) with $v^n_l(0)=\1\{l=1\}$, so that initially we start with only singletons. In this case, $\mc{Z}^n_t$ is simply the Erd\H{o}s-R\'{e}nyi random graph on $[n]$ in which each edge is present, independently of one another, with probability $1-e^{-t/n}$. It is well known that, in the limit as $n\to\infty$, $v^n_l(t)\to v_l(t)$, where $v_l(t)$ is given explicitly in \eqref{EReqssol} below, and the behaviour observed is the following:
\begin{itemize}
\item For $t\in[0,1)$, $l\mapsto v_l(t)$ has an exponential tail and $\sum_{l=1}^\infty v_l(t)=1$.
\item At $t=1$, $l\mapsto v_l(1)$ has a polynomial tail and $\sum_{l=1}^\infty v_l(t)=1$.
\item For $t>1$, $l\mapsto v_l(t)$ has an exponential tail but $\sum_{l=1}^\infty v_l(t)<1$. The reason for this is that a giant component, containing a positive proportion of the vertices, has formed and this component is not picked up by the $v^n_l(\cdot)$ as $n\to\infty$. As $t\to\infty$ this (unique) giant component gradually accumulates all the vertices, so that $\sum_{l=1}^\infty v_l(t)\to 0$.
\end{itemize} 
In fact, as our description above of the cluster growth rates suggests, in this case the limit $t\mapsto(v_l(t))_{l=1}^\infty$ satisfies
\begin{equation}\label{EReqs}
\frac{dv_k(t)}{dt}=\frac{k}{2}\sum\limits_{l=1}^{k-1}v_l(t)v_{k-l}(t)-kv_k(t)
\end{equation}
for all $k\geq 1$. These equations are known as the Smoluchowski coagulation equations with multiplicative kernel. The unique solution to \eqref{EReqs}, with initial condition $v_k(0)=\1\{k=1\}$, is given by
\begin{equation}\label{EReqssol}
v_k(t)=\frac{k^{k-1}}{k!}e^{-kt}t^{k-1}.
\end{equation}

Let us return to Assumption \ref{crit}, \textit{which we assume from now on}. As we said above, this means that in the limit any giant component is killed instantaneously as soon as it appears. However, clusters of any constant size $k\in\N$ do not see fires as $n\to\infty$. As a result, \eqref{EReqs} still holds for $k\geq 2$, but $v_1$ feels an influx of singletons caused by the fires. Such fires can only occur once enough time has passed for the environment to grow clusters of large size; this time is known as the \textit{gelation time} $T_{gel}$. The time $T_{gel}$ depends on the initial condition $v_l(0)=\lim_{n\to\infty}v^n_l(0)$ and (see Section \ref{Tgelsec}) is given by
\begin{equation}
T_{gel}=\left(\sum\limits_{l=1}^\infty lv_l(0)\right)^{-1}\,.\label{E: Tgel}
\end{equation}
Consequently, it is natural to expect that \eqref{EReqs} holds for all $k$ up until $T_{gel}$, whereas after $T_{gel}$ \eqref{EReqs} holds \textit{only} for $k\geq 2$. 

In \cite{RT2009} considerable effort is devoted to showing that (under Assumption \ref{crit}) the limiting process $t\mapsto(v_l(t))_{l=1}^\infty$ satisfies 
$\sum_{l=1}^\infty v_l(t)=1$
for all $t\geq 0$, in contrast to the Erd\H{o}s-R\'{e}nyi case. The result is that this equation replaces the $k=1$ case of \eqref{EReqs}, for all time. To be precise, the system of equations we are interested in as the limit of the $v^n_k(\cdot)$s is described by the following result.

\begin{theorem1}[\citet{RT2009}]\label{TRode}
Suppose that $\sum_1^\infty l^3v_l(0)<\infty$. 
Then there is a unique solution to the following system of equations, called the critical forest fire equations:
\begin{align}
\frac{dv_k(t)}{dt}&= -kv_k(t) + \frac{k}{2}\sum\limits_{l=1}^{k-1}v_l(t)v_{k-l}(t)\hspace{2pc}\text{ for }k\geq 2\label{odegeq2}\\
\sum\limits_{l=1}^\infty v_l(t)&=1.\label{odecrit}
\end{align}
For such a solution, $v_k(t)\in[0,1]$ for all $t\geq 0$ and all $k \in \mathbb{N}$. Further, there exists a function $\varphi:[0,\infty)\to\R$ such that for all $t\neq T_{gel}$ we have
\begin{equation}\label{v1eq}
\frac{dv_1(t)}{dt}=-v_1(t)+\varphi(t).
\end{equation}
The function $v_1$ is continuous on $[0,\infty)$ and continuously differentiable on $(0,T_{gel})\cup(T_{gel},\infty)$. In fact, $\varphi=0$ on $[0,T_{gel})$ and $\varphi$ is both positive and locally Lipschitz on $[T_{gel},\infty)$.

From the same initial conditions, the solution of \eqref{odegeq2}+\eqref{odecrit} coincides with the solution of  \eqref{EReqs} for $t\in[0,T_{gel}]$.  For times $t>T_{gel}$ the solutions do not coincide. For $t \ge T_{gel}$ the solution of \eqref{odegeq2}+\eqref{odecrit} satisfies $\sum_{l=k}^\infty v_l(t)\sim\sqrt{\frac{2 \varphi(t)}{\pi} } k^{-1/2}$ as $k \to \infty$.
\end{theorem1}

\begin{remark1}
Note that the functions $(v_l)$ do not depend on $(\lambda_n)$, except through Assumption \ref{crit}.  {\highlight
  For each fixed $n$ the random functions $v_l^n$ do depend on $\lambda_n$, and finer analysis would be needed to see this dependence in the limit.  
}
\end{remark1}

\begin{remark1}
The function $\varphi$ is the limiting rate at which mass within $\mc{Z}^n$ burns as $n\to\infty$, where each vertex is thought of as having mass $1/n$. Note that $\varphi$ is not continuous at $T_{gel}$.
\end{remark1}

Note that the Smoluchowski equations \eqref{EReqs} can be solved separately for $k=1,2,\ldots$ in turn; consequently existence and uniqueness of solutions is not difficult to prove. However, the critical forest fire equations \eqref{odegeq2}+\eqref{odecrit} form a genuinely infinite system that is significantly harder to work with. As was observed in \cite{RT2009}, equations \eqref{odegeq2}+\eqref{odecrit} can be recast (using a suitable moment generating function) as a Burgers control problem (see equation \eqref{E: Burgers}), where $\varphi$ is the control function.

From now on we take $v_l(t)$ as given by Theorem \ref{TRode}. As part of Theorem 2 of \cite{RT2009} it is shown that for each $\epsilon>0$ and each $t\geq 0$,
\begin{equation}\label{fixedtimeTRthm}
\Prob\left[|v^n_l(t)-v_l(t)|>\epsilon\right]\to 0
\end{equation}
as $n\to\infty$, providing that $v_l^n(0)\to v_l(0)$ and $\sum l^3 v_l(0)<\infty$. In fact, convergence was proven in a slightly stronger sense than \eqref{fixedtimeTRthm} and we will show that convergence holds in a stronger sense still; in Section \ref{FFFtopsec} we state the convergence theorem of \cite{RT2009} precisely and show that it can be upgraded into locally uniform convergence in probability, leading to the following result.

\begin{theorem1}\label{TRthmupg}
Suppose that, for each $l\in\N$, $v^n_l(0)\to v_l(0)$ as $n\to\infty$, where $\sum_l l^3 v_l(0)<\infty$. Then for each $\epsilon>0$, and each $T>0$,
$$\Prob\left[\sup\limits_{l\in\N}\sup\limits_{s\in[0,T]}\left|v^n_l(s)-v_l(s)\right|>\epsilon\right]\to 0$$
as $n\to\infty$.
\end{theorem1}

Recall that, for each $l$, the function $t\mapsto v_l(t)$ is continuous. So far, in keeping with \cite{RT2009}, we have used left-continuous $v^n_l(t)$ (and left-continuous $\mc{Z}^n$). Note that Theorem \ref{TRthmupg} would also hold if we replaced $v^n_l$ by its {\cadlag} version.

{\highlight
In fact, with Theorem \ref{TRthmupg} in hand it is advantageous to switch from working with left-continuous paths to working with {\cadlag} paths (i.e.~right-continuous with left limits). Having {\cadlag} paths will be helpful to us because our main result (Theorem \ref{CntoC}) is a result about convergence of jump processes and as part of its proof we will use standard results concerning martingales and stopping times.

To avoid unnecessary notation we will use the same symbols to refer to both versions; our convention is that up to this point and for the duration of Section \ref{FFFtopsec} (in which Theorem \ref{TRthmupg} is proved) we use left-continuous paths but in all other sections (and for the remainder of Section \ref{introsec}) we use {\cadlag} paths. 
}

\subsection{Cluster Growth}
\label{cgrowthsec}

The sequence $v^n_l(\cdot)$ characterizes the globally averaged behaviour of (the size of) all clusters present in $\mc{Z}^n_\cdot$ as $n\to\infty$. Our aim in this paper is to paint a further level of detail into this picture by describing the behaviour of the cluster associated to a vertex chosen uniformly at random within $\mc{Z}^n$. 

Let $p$ be a vertex sampled uniformly at random from $[n]$ (independently of $\mc{Z}^n$) and set
$$C^n_t=|\mc{C}^n_t(p)|.$$
In order to understand the behaviour of $C^n$, let us consider heuristically the evolution when $C^n_t=k$. 
{\highlight 
In this case the total rate of the growth clocks of edges with at least one endpoint in $\mc{C}^n_t(p)$ is $k(1+\mc{O}(\frac{k}{n}))$ as $n \to \infty$, uniformly in $k$ (see \eqref{Rnk} for the exact rate).
}
 As $n\to\infty$ we typically have $n\gg k$ so, when the next new edge is connected to $\mc{C}^n_t(p)$, it is very unlikely for both the endpoints of this edge to be within $\mc{C}^n_t(p)$. Consequently the corresponding cluster $\mc{C}'$ to which $\mc{C}^n_t(p)$ connects looks very similar to a size biased sample of the clusters in $\mc{Z}^n_t$, that is $\Prob\left[|\mc{C}'|=j\right]\approx \E[v^n_j(t)]$. 

In this paper we define and study $C$, a certain Markov branching process in a varying environment.  We will show that $C$ is the limit of the processes $C^n$ as $n \to \infty$.
In view of Theorem \ref{TRthmupg}, if $t>T_{gel}$ and $C_t=k$, we expect $C_t$ to increase at rate $k$ to a size $k+L$ where $L$ is a random variable whose distribution satisfies $\Prob[L \ge l] \asymp l^{-1/2}$. Such a process is explosive in finite time.

When $C^n_t$ has size $k$ it sees a fire at rate $k\lambda_n$, which tends to zero as $n\to\infty$. However, if $C^n_t$ manages to grow large enough (in particular, to size $C^n_t\gg \frac{1}{\lambda_n}$) then the cluster $\mc{C}^n_t(p)$ will burn and $C^n_t$ will return to $1$. It is not obvious that $C^n_t$, started at size $k=\mc{O}(1)$, can grow to size $\frac{1}{\lambda_n}$ in $\mc{O}(1)$ time but in Section \ref{couplingsec} we will show that in fact this does occur. Consequently, in the limit as $n\to\infty$ we expect to see an instantaneous return to $1$ at each explosion time.

Let $E=\N$ and equip $E$ with the topology such that $\lim_{n\to\infty}n=1$ and $1$ is the only non-isolated point of $E$. Note that $E$ is compact and that the topology on $E$ is metrizable, for example by the metric $d_E(i,j) = |f(i) - f(j)|$, where $f(i) = 1/i$ for $i \ge 2$ and $f(1) = 0$. We will use $E$ as the state space for $C$, so that $C$ is continuous at each of its explosion times. 

We are now in a position to state our main result.

\begin{defn}\label{Cdef}
Let $t \mapsto C_t$ be the unique {\cadlag} $E$-valued strongly Markov process such that:
\begin{itemize}
\item The distribution of $C_0$ is $k\mapsto v_k(0)$.
\item $C$ jumps out of state $k$ with rate $k$. When such a jump occurs at (the random) time $\tau$ then, conditionally on $\tau$, the value of $C$ increases by $L$, sampled according to the distribution $\Prob_\tau\left[L=l\right]=v_l(\tau)$.
\end{itemize}
\end{defn}

\begin{theorem1}\label{CntoC} Suppose $\sum l^3v_l(0)<\infty$ and that $\lim_{n\to\infty}v^n_l(0)= v_l(0)$ for each $l$. Then there exists a coupling of $C^n$ and $C$ such that, for each $\epsilon>0$ and $T>0$,
$$\Prob\left[\sup\limits_{s\in[0,T]}d_E(C^n_s,C_s)>\epsilon\right]\to 0$$
as $n\to\infty$.
\end{theorem1}

\begin{remark1}
The coupling mentioned in Theorem \ref{CntoC} is constructed explicitly as part of the proof. 
\end{remark1}

Note that Definition \ref{Cdef} provides a clear description of how the increments of $C$ behave, but it does not offer a characterization of the distribution at fixed time. We rectify this with the following result, which will be proved as part of argument leading to Theorem \ref{CntoC}.

\begin{prop}\label{Ctdist}
Suppose that $\sum_l l^3v_l(0)<\infty$. Then, for all $t\in[0,\infty)$ and all $l\in\N$, $\Prob\left[C_t=l\right]=v_l(t)$.
\end{prop}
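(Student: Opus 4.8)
The plan is to identify the one-dimensional marginals of $C$ by comparison with the finite-size model, leaning on Theorems~\ref{TRthmupg} and~\ref{CntoC} rather than on the generator of $C$ directly. First I would observe that, as $p$ is uniform on $[n]$ and independent of $\mc Z^n$, conditioning on $\mc Z^n$ gives
\[
\Prob[C^n_t = l] \;=\; \E\Big[\tfrac1n\big|\{k\in[n]\set|\mc C^n_t(k)|=l\}\big|\Big] \;=\; \E[v^n_l(t)].
\]
By Theorem~\ref{TRthmupg}, $v^n_l(t)\to v_l(t)$ in probability, and since $0\le v^n_l(t)\le1$ the bounded convergence theorem upgrades this to $\E[v^n_l(t)]\to v_l(t)$; hence $\Prob[C^n_t=l]\to v_l(t)$ for every fixed $t$ and $l$.

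Next I would push this through to the limit process. By Theorem~\ref{CntoC} the coupling it supplies gives $C^n_t\to C_t$ in probability in the metric $d_E$, hence $C^n_t\to C_t$ in distribution on the compact metric space $E$. For $l\ge2$ the point $l$ is isolated in $E$, so $\{l\}$ is clopen (open because $l$ is isolated; closed because sufficiently small neighbourhoods of the only non-isolated point $1$ omit $l$), and therefore $\1_{\{l\}}\colon E\to\R$ is bounded and continuous; the portmanteau theorem then yields $\Prob[C^n_t=l]\to\Prob[C_t=l]$, and combined with the first step this gives $\Prob[C_t=l]=v_l(t)$ for all $l\ge2$. The case $l=1$ does not follow this way, since $\{1\}$ is not open; instead I would use conservation of mass: as $C$ takes values in $E=\N$ and, by the well-posedness in Definition~\ref{Cdef}, makes only finitely many explosions on any finite interval, $C_t$ is an honest $\N$-valued random variable, so $\sum_{l\ge1}\Prob[C_t=l]=1$, while $\sum_{l\ge1}v_l(t)=1$ by~\eqref{odecrit}. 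Subtracting the identities already obtained for $l\ge2$ leaves $\Prob[C_t=1]=1-\sum_{l\ge2}v_l(t)=v_1(t)$.

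Two points need checking, neither of them deep: that invoking Theorem~\ref{CntoC} here is not circular (its coupling is constructed from the convergence of the \emph{environment} $v^n\to v$, Theorem~\ref{TRthmupg}, and not from any property of the law of $C^n_t$, so Proposition~\ref{Ctdist} is a genuine corollary), and that $C$ a.s.\ makes only finitely many explosions on $[0,T]$ (by the strong Markov property the successive inter-explosion times are i.i.d., and each dominates the $\Exp(1)$ holding time in state $1$, so their partial sums diverge). The more substantial obstacle arises only if one insists on a self-contained proof: reading off the forward equations $\dot u_k=-ku_k+\sum_{j=1}^{k-1}j\,u_j\,v_{k-j}$ ($k\ge2$) together with $\sum_l u_l\equiv1$ and $u_l(0)=v_l(0)$, one checks after the symmetrisation $\sum_{j<k}j\,v_j\,v_{k-j}=\tfrac k2\sum_{j<k}v_j\,v_{k-j}$ that $(v_l)$ solves the same hierarchy, and one would like to conclude by uniqueness. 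But this hierarchy is only triangular \emph{modulo} $u_1$, and pinning $u_1$ down from $\sum_l u_l\equiv1$ via a Gr\"onwall estimate would require controlling $\sum_j j\,|u_j-v_j|$; since $\sum_l l\,v_l(t)=\infty$ for $t\ge T_{gel}$ (because $\sum_{l\ge k}v_l(t)\sim\sqrt{2\varphi(t)/\pi}\,k^{-1/2}$ with $\varphi(t)>0$ there), no first-moment bound is available past the gelation time and the naive argument breaks down — which is exactly why routing everything through the finite-$n$ model is the cleaner path.
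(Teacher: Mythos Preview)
Your argument is circular as written. The paper's proof of Theorem~\ref{CntoC} \emph{does} use Proposition~\ref{Ctdist}: the coupling argument in Section~\ref{couplingsec} requires Lemma~\ref{bigdiefast2} (that large clusters in $\wt C$ burn quickly), and the proof of that lemma begins ``By Proposition~\ref{Ctdist} we have $\Prob[C_s=j]=v_j(s)$ for all $s$.'' This feeds into the bound~\eqref{K5}, which is essential in Lemma~\ref{tauT}. The paper is explicit about this dependence: see the sentence immediately following Proposition~\ref{Ctdist} in Section~\ref{cgrowthsec}, and the summary of dependencies at the end of Section~1.3. Your claim that the coupling ``is constructed from the convergence of the environment $v^n\to v$ \dots\ and not from any property of the law of $C^n_t$'' is therefore incorrect: controlling the time $\wt C$ spends above level $K$ is a statement about the law of $C$, not about the environment.

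That said, the circularity can be broken, and you have most of the ingredients. The proof of Lemma~\ref{bigdiefast2} only needs $\sum_l \Prob[C_s=l]=1$ and continuity of each $u_l$ in $s$, not the full identification $u_l=v_l$: with honesty in hand one has
\[
\delta\,\Prob[A_{k,\delta}]\le T-\sum_{j=1}^k\int_0^T u_j(s)\,ds\to 0
\]
as $k\to\infty$ by monotone convergence. You do establish honesty, so if you first rewrite Lemma~\ref{bigdiefast2} this way, then prove Theorem~\ref{CntoC}, and only then deduce Proposition~\ref{Ctdist} via your portmanteau argument, the logic is sound. (One small correction: the inter-explosion times are not i.i.d., since the offspring law $l\mapsto v_l(t)$ is time-inhomogeneous; what \emph{is} true, and sufficient, is that each inter-explosion time stochastically dominates an independent $\Exp(1)$ holding time in state~$1$.)

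By contrast, the paper proves Proposition~\ref{Ctdist} directly and first, via a self-contained generating-function analysis: it writes $Z_t=Y_t-X_t$, derives the transport-type equation~\eqref{Zeq}, integrates along the characteristic curves $\psi_y$ of Lemma~\ref{charcurves}, and uses the probabilistic interpretation of $\psi_y$ (Lemma~\ref{L: explosion prob}) together with Gr\"onwall to force $Z\equiv 0$. This route avoids any appeal to the finite-$n$ model and yields the characteristic-curve interpretation (Remark~\ref{psiexplode}) as a byproduct; your route is shorter once Theorem~\ref{CntoC} is available, but requires reorganising the paper's logical order and patching Lemma~\ref{bigdiefast2} as above.
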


Recall that the growth of the cluster of any fixed vertex in $\mc{Z}^n_t$ is driven by sampling increments from the (random) cluster size distribution of $\mc{Z}^n_t$, with a small modification to correct for the possibility that a new edge forms a cycle. In the limit as $n \to \infty$ the cluster size distribution becomes deterministic, so we expect the local limit of the cluster size of a fixed vertex to be strongly Markov (with respect to its generated filtration), even though $C^n_t$ is not. In the finite model $\mc{Z}^n$, exchangeability implies that the distribution of the size of the cluster of a randomly sampled point is equal to the size biased distribution of the global distribution of cluster sizes. Proposition \ref{Ctdist} shows that this property passes meaningfully through the limit.
The heuristic that we have just given for why Proposition \ref{Ctdist} should hold true relies on Theorem \ref{CntoC}, whereas in fact Proposition \ref{Ctdist} will be a key step in our proof of Theorem \ref{CntoC}.

{\highlight{
\subsection{Structure of the paper}

In Section~\ref{litsec} we place the Erd\H{o}s-R\'{e}nyi forest fire model and our results in the context of some related models in the literature on coagulation-fragmentation processes.

In Section~\ref{FFFtopsec} we prove Theorem~\ref{TRthmupg}. This section refers to technical details of \cite{RT2009}. The main object of Section~\ref{Ctdistsec} is the proof of Proposition~\ref{Ctdist}. This is done by analysing a linear control problem which characterizes the distribution of the process $C_t$. In Lemma~\ref{L: explosion prob} we provide a probabilistic interpretation of the associated characteristic curves that may be of independent interest. In Section \ref{explodesiosec} we establish the long-term average behaviour of $\varphi$ and, as a consequence, we show that $C_t$ explodes infinitely often. Finally, in Section~\ref{couplingsec} we prove Theorem \ref{CntoC} by constructing a coupling between the (finite) Erd\H{o}s-R\'{e}nyi forest fire model $\mathcal{Z}^n$ and the process $C_t$. An outline of this coupling is given at the start of Section \ref{couplingsec}. 

Sections \ref{FFFtopsec}, \ref{Ctdistsec} and \ref{couplingsec} can be read essentially independently of one another. Section \ref{Ctdistsec} does not rely on anything from Section \ref{FFFtopsec}, whilst Section \ref{couplingsec} relies only on Sections \ref{FFFtopsec} and \ref{Ctdistsec} through the statements of Theorem \ref{TRthmupg} and Proposition \ref{Ctdist}.
}}

Throughout Sections~\ref{FFFtopsec},~\ref{Ctdistsec} and~\ref{couplingsec}, as well as Assumption \ref{crit} we assume without further comment the hypotheses on the initial conditions that appear in the statements of our main results, namely $\sum_l l^3 v_l(0)<\infty$ and that $v^n_l(0)\to v_l(0)$ as $n\to\infty$ for each $l\in\N$.

\subsection{Relationships to other models}
\label{litsec}

In general, long range interactions between large clusters are not easy to analyse rigorously, or even simulate. As a consequence, rigorous results concerning forest fire models are not common. One model in particular deserves special mention in comparison to our own. The Drossel-Schwabl model (introduced in \cite{DS1992}) differs from our own model in two important respects: its underlying graph is the lattice $[-n,n]^d$ and growth clocks correspond to vertices rather than edges. Despite receiving much attention in the physics literature, in the appropriate limit of the stationary two dimensional Drossel-Schwabl model, it is not even known whether the probability that the origin is occupied is less than or equal to $1$ (as was noted by van den Berg and Brouwer \cite{BB2006}, who investigate a closely related question).

\citet{SDS2002} gave a detailed non-rigorous description of the two dimensional Drossel-Schwabl model in its stationary state. They showed that in this case self-organized criticality occurs through the appearance of two qualitatively different types of fires, occurring simultaneously within the model but on different scales. Such multi-scale behaviour is often associated to self-organized criticality; see \citet{P2012} for a detailed survey of the physics literature. 

There is a natural connection between forest fire models and percolation, resting on the heuristic observation that taking a forest fire model and suppressing its fires results in a percolation model. As we saw in Section \ref{mfffsec}, in the Erd\H{o}s-R\'{e}nyi forest fire model this connection leads to the dynamical Erd\H{o}s-R\'{e}nyi model. 

There has been recent interest in building a forest fire mechanism into percolation on $\Z^d$, by starting with supercritical percolation, burning the infinite cluster (but keeping the other finite clusters) and then asking what additional edge density must be added in order to create a new infinite cluster. This question was posed by van den Berg and Brouwer in \cite{BB2004} and was investigated for $d\geq 7$ by \citet{ADK2013} and for $d=2$ by \citet{KMS2013}. 

The frozen percolation model, introduced by \citet{A1987}, is another hybrid of forest fires and percolation. In frozen percolation vertices in clusters that become infinite are instantly removed from the model and never return. Consequently, the total number of vertices in the model decreases as time passes; unlike \eqref{v1eq} there is no influx of mass back into $v_1(t)$ and this makes the model somewhat easier to analyse. Frozen percolation is known to exhibit self-organized criticality and limit theorems concerning the size of the cluster of a typical vertex in mean field frozen percolation have been established in \citet{R2009}. A forthcoming work of Martin and R\'{a}th will give a precise description, in terms of the multiplicative coalescent, of the behaviour of the largest clusters in mean field frozen percolation with $\lambda_n=n^{-1/3}$.

Returning to forest fire models, attempts have been made to construct limits in the form of infinite interacting particle systems. In the case where the underlying graph is the integer lattice $[-n,n]^d$ and $\lambda_n=\lambda\in(0,\infty)$ stays constant as $n\to\infty$, it was shown in \citet{D2006, D2006a, D2009a} that such a a limiting process exists. 
\citet{S2012} showed that this process has a stationary distribution. 
Note that in this limit clusters will always burn while they are still of $\mc{O}(1)$ size. 

 {\highlight
In the case where the underlying graph is a regular tree, and with $\lambda_n$ tending slowly, but not too slowly, to $0$ (or rather, along a suitable subsequence where $n$ is the number of vertices of a regular tree), \citet{G2014} has shown that the limit, up to and including the gelation time, is a dynamical version of self destructive percolation. \citet{G2014a} considers the case where the underlying graph is the upper half plane of $\Z^2$ and, with a slightly different approximation scheme and burning mechanism, establishes tightness (but not uniqueness) of the limit.
}

One dimensional forest fire models have received much more rigorous treatment than dimensions greater than one; like our own model they have simplified spatial structure. Notably, \citet{BF2010} constructed a particle system limit of one dimensional forest fire models, in the appropriate scaling regime where $\lambda$ tends to zero. {\highlight{In \cite{BF2012} the same authors find interesting limits of the equilibria of an infinite system of coupled ODEs, which was obtained from a one dimensional forest fire model by a mean field approximation. These equations are similar to the critical forest fire equations discussed in the present paper, but have a constant coalescence kernel instead of a multiplicative one, which is to say that large clusters wait as long to coalesce as small ones do, although clusters burn at a rate proportional to their size.}}  \citet{B2011} investigates a forest fire version of Knuth's parking model, which is related to hashing with linear probing and, in a similar vein, \citet{BT2001} investigate a forest fire model related to signal processing and show that it exhibits self organized criticality. We refer the reader interested in the one dimensional case to the references therein.

We have already introduced the relationship between our model and the Smoluchowski coagulation equations with multiplicative kernel, in \eqref{EReqs} and Theorem \ref{TRode}. A wide ranging survey of Smoluchowski coagulation equations and associated stochastic systems and be found in \citet{A1999}. A derivation of Smoluchowski's equation as the limit in law of an appropriate (stochastic) particle system, along with existence and uniqueness results corresponding to quite general kernels can be found in \cite{N1999, N2000}.

\citet{DFT2002} study what, in our terminology, is the growth process of the cluster of a tagged particle in the environment associated to Smoluchowski coagulation equations with more general kernels, up to time $T_{gel}$. In particular, they use an analogue of Proposition \ref{Ctdist} to construct solutions to Smoluchowski coagulation equations over time $[0,T_{gel})$. By contrast, for \eqref{odegeq2}+\eqref{odecrit} the conservation of mass beyond $T_{gel}$ means that the tagged particle exhibits interesting behaviour after $T_{gel}$ but existence and uniqueness of solutions is already known (see Theorem \ref{TRode}).

\section{The Space Of Forest Fire Evolutions}
\label{FFFtopsec}

Theorem 2 of \citet{RT2009}, which we seek to improve upon in this section, identifies the limit of the process $t\mapsto (v_l^n(t))_{l=1}^n$. In order to understand their result we must first describe the space in which $(v_l^n(t))_{l=1}^n$ lies. 

Let $T>0$. Let $\mathscr{W}_T$ be the space of paths $w:[0,T]\to[0,1]$ that are left-continuous with right limits and are of bounded variation. Note that each such path $w(\cdot)$ can be written as 
\begin{equation}\label{muw}
w(t)=\mu_w[0,t)
\end{equation}
for some finite signed measure $\mu_w$ on $[0,T]$. For $w^{n},w\in\mathscr{W}_T$ we say that
$w^{n}\to w$ if and only if $\mu_{w^{n}}\to \mu_{w}$ weakly as $n\to\infty$. It is shown in \cite{RT2009} that there is a topology inducing this convergence under which $\mathscr{W}_T$ is a Polish space. 

Let
$\mathscr{V}=\left\{\vec{u}=(u_l)_{l=1}^\infty\set u_l\geq 0\text{ and }\sum_{l=1}^\infty u_l\leq 1\right\}.$
and for each $T>0$ let 
$$
\mathscr{E}_T=\left\{\vec{u}:[0,T]\to\mathscr{V}\set\text{ for each }l, u_l(\cdot)\text{ is left-continuous and of bounded variation}\right\}
$$
If $\vec{u}^{n}=(u^{n}_l(\cdot))\in\mathscr{E}_T$ and $\vec{u}=(u_l(\cdot))\in\mathscr{E}_t$ then we say 
\begin{equation}\label{Etop}
\vec{u}^{n}\to\vec{u}\hspace{0.5pc}\iff\hspace{0.5pc}\text{for each }l,\; u^{n}_l\to u_l\text{ in }\mathscr{W}_T
\end{equation}
where, again, the convergence on both sides is as $n\to\infty$. This topology makes $\mathscr{E}_T$ Polish. The space $\mathscr{E}_T$ is referred to in \cite{RT2009} as the space of `forest fire evolutions' over the time interval $[0,T]$.

We set $\vec{v}^n=(v^n_l(\cdot))$ and $\vec{v}=(v_l(\cdot))$. We consider these as elements of $\mathscr{E}_T$ (for each $T$) without comment by restricting the domains of the paths $v^n_l$ and $v_l$ to the time interval $[0,T]$. 

\begin{theorem1}[Theorem 2, \cite{RT2009}]\label{TRthm}
Suppose that $\sum l^3v_l(0)<\infty$ and that $v^n_l(0)\to v_l(0)$ for each $l\in\N$ as $n\to\infty$. Then, for each $T>0$, $\vec{v}^n\to\vec{v}$ in probability in $\mathscr{E}_T$.
\end{theorem1}

We will now upgrade Theorem \ref{TRthm} into Theorem \ref{TRthmupg}. In order to do this we will need to look a little way inside of the proof of Theorem \ref{TRthm} but first we record an elementary result.

\begin{lemma1}\label{elemlemma}
Let $T>0$ and let  $w\in\mathscr{W}_T$ be continuous. For each $n\in\N$ let $w^n$ be a $\mathscr{W}_T$ valued random variable such that the path $w^n$ is increasing and suppose that $w^n\to w$ in probability in $\mathscr{W}_T$. Then, for each $\epsilon>0$, $\Prob[\sup_{s\in[0,T]}|w^n(s)-w(s)|>\epsilon]\to 0$ as $n\to\infty$.
\end{lemma1}


\begin{proof}
We have $w^n\to w$ in probability and, since $\mathscr{E}_T$ is separable, we may apply the Skorohod Representation Theorem and assume that $w^n\to w$ almost surely (after a change of our underlying probability space). Thus $\mu_{w^n}\to \mu_w$ almost surely, in the sense of the weak topology on measures on $[0,T]$.

Since $w$ is continuous the signed measure $\mu_w$ defined by \eqref{muw} is non-atomic and hence $w(t)=\mu_w[0,t]$. {\highlight Since $\mu_{w^n}$ is a non-negative measure for all $n$, $\mu_w$ is also a non-negative measure.} Further, $\mu_w\{0,t\}=0$ so $[0,t]$ is a $\mu_w$-continuity set. The Portmanteau Theorem thus implies that $\mu_{w^n}[0,t]\to \mu_{w}[0,t]$ almost surely. Similarly, $\mu_{w^n}\{t\}\to \mu_w\{t\}=0$ almost surely so we can conclude that
\begin{equation}\label{elunifp}
w^n(t)=\mu_{w^n}[0,t]-\mu_{w^n}\{t\}\to \mu_{w^n}[0,t]=w(t)
\end{equation}
almost surely.

Now let $\epsilon>0$. Since $[0,T]$ is compact $w$ is uniformly continuous and hence there exists $\delta>0$ such that for all $|s-t|<\delta$, $|w(t)-w(s)|<\epsilon$. Let $M$ be such that $(M-1)\delta <T\leq M\delta$. Note that the set $\mc{T}=\{0,\delta,2\delta,\ldots,(M-1)\delta,T\}$ is finite, hence
\begin{equation}\label{eldivs}
\Prob\left[\exists t\in\mc{T}, |w^n(t)-w(t)|>\epsilon\right]\to 0\;\text{ as }n\to\infty.
\end{equation}

Fix $s\in[0,T]$. Then there is some $k$ such that $(k-1)\delta<s<(k+1)\delta$. Since $w^n$ is increasing we have
$w^n((k-1)\delta)\leq w^n(s)\leq w^n((k+1)\delta)$
and from the uniform continuity we have
\begin{align*}
|w(s)-w((k-1)\delta)|\leq 2\epsilon,\hspace{1pc}
|w(s)-w((k+1)\delta)|\leq 2\epsilon.
\end{align*}
On the complement of the event in \eqref{eldivs} we have also that 
\begin{align*}
|w^n((k-1)\delta)-w((k-1)\delta)|\leq \epsilon,\hspace{1pc}
|w^n((k+1)\delta)-w((k+1)\delta)|\leq \epsilon
\end{align*}
and we are thus able to conclude that
$\Prob[\sup_{s\in[0,T]}|w^n(s)-w(s)|\geq 3\epsilon]\to 0$
as $n\to\infty$, which completes the proof.
\end{proof}

We now describe the evolution of $v^n_k$ in terms of the two forces affecting it: coagulating clusters and burning clusters. Define $Q^n_{j,k}(t)$ to be the number of times during $[0,t]$ that a cluster of size $k$ and a cluster of size $j$ coagulate to form a cluster of size $j+k$, within $\mc{Z}^n$. For $j>1$ define $R^n_{j}$ to be the number of times during $[0,t]$ that a cluster of size $j$ burns. Then set
\begin{align*}
q^n_{j,k}(t)&=\frac{Q^n_{j,k}(t)}{n},\hspace{1pc} 
q^n_k(t)=\sum\limits_{l=1}^\infty q_{k,l}(t),\hspace{1pc}
r^n_j(t)=\frac{R^n_{j}(t)}{n},\hspace{1pc}
r^n(t)=\sum\limits_{k=2}^\infty r^n_k(t).
\end{align*}
It is readily seen from the definition of $\mc{Z}^n$ that
\begin{equation}\label{vqrn}
v^n_k(t)=v^n_k(0)+\frac{k}{2}\sum\limits_{l=1}^{k-1}q_{l,k-l}^n(t)-kq^n_k(t)-r^n_k(t)+\1\{k=1\}r^n(t).
\end{equation}
We now collate together results from Proposition 1, equations (19), (37) and Theorem 2 of \cite{RT2009}. There are continuous functions $q_{j,k}(\cdot)$ and $r_k(\cdot)$ such that for all $j,k$
\begin{equation}\label{qrconv}
q_{j,k}^n\to q_{j,k},\hspace{1pc} q_k^n\to q_k,\hspace{1pc} r_k^n\to r_k,\hspace{1pc} r^n\to r
\end{equation}
in $\mathscr{W}_T$ for any $T>0$ and, further,
$$v_k(t)=v_k(0)+\frac{k}{2}\sum\limits_{l=1}^{k-1}q_{l,k-l}(t)-kq_k(t)-r_k(t)+\1\{k=1\}r(t)$$
where $q_k(t)=\sum_{l=1}^\infty q_{k,l}(t)$ and $r(t)=\sum_{k=2}^\infty r_k(t)$. In fact $r_k(\cdot)=0$ (heuristically, this is because a cluster of size $k$ burns at rate $k\lambda_n\to 0$) but we will continue to write $r_k$ for symmetry.

\begin{proof}[Proof of Theorem \ref{TRthmupg}]
Let $\epsilon>0$ and $T\in(0,\infty)$. We will prove the theorem in two steps, the first of which is to show that for each $k\in\N$,
\begin{equation}\label{TRthmupgpre}
\Prob\left[\sup\limits_{s\in[0,T]}\left|v^n_k(t)-v_k(t)\right|>\epsilon\right]\to 0.
\end{equation}
as $n\to\infty$.

Let us first look at $k\geq 2$. In this case we can write
\begin{align*}
v^n_k(t)-v_k(t)&=v^n_k(0)-v_k(0)+\overbrace{\left(\frac{k}{2}\sum\limits_{l=1}^{k-1}q^n_{l,k-l}(t)+kq_k(t)+r_k(t)\right)}^{f_1(t)}
-\overbrace{\left(\frac{k}{2}\sum\limits_{l=1}^{k-1}q_{l,k-l}(t)+kq_k^n(t)+r^n_k(t)\right)}^{f_2(t)}
\end{align*}
We have that $v^n_k(0)-v_k(0)$ converges to zero as $n\to\infty$. Moreover, both $f_1$ and $f_2$ are increasing functions and elements of $\mathscr{W}_T$. Equation \eqref{qrconv} implies that $f_1$ and $f_2$ both converge (in $\mathscr{W}_T$) to $$\frac{k}{2}\sum_{l=1}^{k-1}q_{l,k-l}(t)+kq_k(t)+r_k(t).$$ Applying Lemma \ref{elemlemma} to $f_1$ and $f_2$ respectively, we obtain that in fact $v_k^n-v_k$ tends to $0$ locally uniformly in probability, which proves \eqref{TRthmupgpre} for $k\geq 2$.

The case $k=1$ remains. In this case, $v^n_1(t)-v_1(t)$ has the additional term 
$r^n(t)-r(t).$
As we have mentioned above, $r_k=0$ so $r(t)=\sum_{k=2}^\infty r_k(t)=0$. From \eqref{qrconv} we have $r^n\to r$ in $\mathscr{W}_T$ and by Lemma \ref{elemlemma} we have that $r^n\to r$ locally uniformly in probability. Combining this fact with the argument used in the $k\geq 2$ case, we have proved the $k=1$ case of \eqref{TRthmupgpre} and thus completed the proof of \eqref{TRthmupgpre}.

We now deduce Theorem \ref{TRthmupg} from \eqref{TRthmupgpre}. By Theorem \ref{TRode}, for each $k$, $t\mapsto v_k(t)$ is continuous on $[0,T]$. By Dini's theorem, we can choose $K\in\N$ be such that 
\begin{equation}\label{>Kbound}
\sup_{s\in[0,T]}\sum_{k=K+1}^\infty v_k(s)\leq \frac{\epsilon}{3}.
\end{equation}
Hence also $\sup_{s\in[0,T]}\sup_{k>K}v_k(s)\leq \frac{\epsilon}{3}$.
Using \eqref{TRthmupgpre}, let $N\in\N$ be such that for all $n\geq N$,
\begin{equation}\label{TRupg1}
\Prob\left[\exists k\leq K, \sup\limits_{s\in[0,T]}\left|v^n_k(t)-v_k(t)\right|\geq\frac{\epsilon}{3K}\right]\leq \epsilon.
\end{equation}
Using \eqref{vndef} and \eqref{odecrit}, we note that, for $k>K$,
\begin{align}
\sup\limits_{s\in[0,T]} v^n_k(s)
&\leq \sup\limits_{s\in[0,T]}\sum\limits_{l=K+1}^\infty v^n_l(s)\notag\\
&=\sup\limits_{s\in[0,T]}\left(1-\sum\limits_{l=1}^Kv_l(s)-\sum\limits_{k=1}^K(v^n_l(s)-v_l(s))\right)\notag\\
&=\sup\limits_{s\in[0,T]}\left(\sum\limits_{l=K+1}^\infty v_l(s)-\sum\limits_{l=1}^K(v^n_l(s)-v_l(s))\right)\notag\\
&\leq\frac{\epsilon}{3}+\sum\limits_{l=1}^K\sup\limits_{s\in[0,T]}|v^n_l(s)-v_l(s)|\label{TRupg2}
\end{align}
Note that to obtain the last line of the above we used \eqref{>Kbound}. Note also that the final line is independent of $k$. Using\eqref{TRupg2}, followed by another application of \eqref{>Kbound} and then two applications of \eqref{TRupg1}, we have
\begin{align*}
&\Prob\left[\sup\limits_{k\in\N}\sup\limits_{s\in[0,T]}\left|v^n_k(s)-v_k(s)\right|>\epsilon\right]\\
&\hspace{3pc}\leq \Prob\left[\sup\limits_{k\leq K}\sup\limits_{s\in[0,T]}\left|v^n_k(s)-v_k(s)\right|>\epsilon\right]+\Prob\left[\sup\limits_{k>K}\sup\limits_{s\in[0,T]}v^n_k(s)+v_k(s)>\epsilon\right]\\
&\hspace{3pc}\leq \Prob\left[\sum\limits_{k=1}^K\sup\limits_{s\in[0,T]}|v^n_k(s)-v_k(s)|>\frac{\epsilon}{3}\right] 
+ \Prob\left[\frac{\epsilon}{3}+\sum\limits_{k=1}^K\sup\limits_{s\in[0,T]}|v^n_k(s)-v_k(s)|+\sup_{k> K}\sup_{s\in[0,T]}v_k(s)\geq \epsilon\right]\\
&\hspace{3pc}\leq \Prob\left[\sum\limits_{k=1}^K\sup\limits_{s\in[0,T]}|v^n_k(s)-v_k(s)|>\frac{\epsilon}{3}\right] 
+ \P\l[\sum\limits_{k=1}^K\sup\limits_{s\in[0,T]}|v^n_k(s)-v_k(s)|\geq\frac{\epsilon}{3}\r]\\
&\hspace{3pc}\leq 2\epsilon.
\end{align*}
This completes the proof of Theorem \ref{TRthmupg}.
\end{proof}

\section{Cluster Growth in the Limiting Process}
\label{Ctdistsec}

In this section we investigate the process $C$ which, in the next section, will be shown to be the limit of $(C^n)$. {\highlight Recall that, from this point on, we use {\cadlag} versions of all processes.} The main goal of this section is to prove Proposition \ref{Ctdist}, which states that $\mathbb{P}[C_t = \ell] = v_\ell(t)$ for all $t > 0$. Our strategy for the proof is as follows. Recall that in Theorem \ref{TRode} we gave a system of ODEs for the evolution of the $(v_k)$; naturally they can also be expressed as a system of integral equations. We define
$$u_k(t)=\Prob\left[C_t=k\right]$$
and try set up a similar system of integral equations for the $(u_k)$. We then have a description of the evolution of $u_k-v_k$ and seek to show that in fact $u_k-v_k$ is identically zero. 

It is convenient to use the probability generating functions
\begin{align*}
X_t(z)&=\sum\limits_{k=1}^\infty z^k v_k(t),\hspace{1pc}
Y_t(z)=\sum\limits_{k=1}^\infty z^k u_k(t)=\mathbb{E}\left(z^{C_t}\right).
\end{align*}
{\highlight 
Both $X_t$ and $Y_t$ are power series in $z$ with non-negative coefficients summing to $1$, which therefore converge uniformly on the closed unit disc in the complex plane. Thus they define analytic functions on the open unit disc $\mathbb{D}$ with continuous extension to the closed unit disc $\overline{\mathbb{D}}$, and we have $X_t(1)=Y_t(1)=1$. We will mostly be concerned with the behaviour of $X_t(z)$ and $Y_t(z)$ for $z \in [0,1]$. 
}

We define
$$Z_t(z)= Y_t(z)-X_t(z).$$
Since $\mathbb{P}(C_0 = k) =  v_k(0)$ we have $Z_0(\cdot)=0$. We seek to show that $Z_t$ is identically zero for all $t \ge 0$ by integrating along characteristic curves.

\subsection{Properties of the environmental generating function}

Let us fix some notation for partial derivatives. {\highlight
Given a function $F(\cdot,\cdot)$ or $F_\cdot(\cdot)$ of two variables, where the first or subscripted variable is a time coordinate and the other variable is spatial, i.e. the variable of a generating function, we will write $\dot{f}$ for the partial derivative of $f$ with respect to the time coordinate and $f'$ for the partial derivative with respect to the spatial coordinate. In the case of functions of two variables that are both time coordinates we will not use the dot notation but will write the derivatives explicitly.
}

The generating function analysis in \citet{RT2009} uses the modified moment generating function
\[ V(t,x) = -1 + \sum_{k=1}^\infty v_k(t) e^{-kx}\,,\] 
which was shown to be a solution to the \emph{critical Burgers control problem}
\begin{equation}\label{E: Burgers}\dot{V}(t,x) = -V'(t,x)V(t,x) + e^{-x}\varphi(t)\end{equation} subject to the boundary conditions $V(t,0) = 0$ and $V(0,x) = V_0(x)$. The function $\varphi$ is known as the \emph{control function}. Recall that $\varphi$ appeared in the statement of Theorem \ref{TRthm}; $\varphi(t)$ is the infinitesimal rate at time t at which mass burns and returns to state $1$. The moment generating function $V(t,x)$ is related to our probability generating function $X_t(z)$ by
\begin{equation}
V(t,x) = -1 + X_t(e^{-x}),\label{E: genfuncs}
\end{equation}
and thus \eqref{E: Burgers} is equivalent to
\begin{equation}\label{E: X equation} \dot{X}_t(z) = z X'_t(z) (X_t(z) - 1) + z \varphi(t).\end{equation}

Using equation \eqref{E: genfuncs}, equations (126) and (127) of \cite{RT2009} translate into estimates about the singularity of the probability generating function $X_t(z)$ at $z=1$. In particular for any $w_0\in(0,1]$ and $\overline{t} > T_{gel}$, uniformly on $(t,w) \in  [T_{gel},\overline{t}] \times (w_0, 1]$ we have
\begin{equation}\label{E: sing est} 1 - X_t(1-w^2) = \sqrt{2\varphi(t)}\,w(1 + O(w)) \end{equation}
and
\begin{equation}\label{E: sing est 2} X_t'(1-w^2) = \sqrt{\frac{\varphi(t)}{2}}\,w^{-1}(1 + O(w)). \end{equation}
In fact these estimates hold on $[T_{gel},\overline{t}] \times (0,1]$ as a consequence of the following elementary lemma.

\begin{lemma1}\label{L: power series}
 Let $f(z) = \sum_{k=1}^{\infty} a_k z^k$ be a power series where $\sum_{k} |a_k| = B < \infty$. Then $f$ defines a continuous  function on $\overline{\mathbb{D}}$ with an analytic restriction to $\mathbb{D}$. For all $z \in \mathbb{D}$ we have 
\[|f(z)| \le \sum_{k} |a_k| |z|  \le B|z| \]
 and for each $n \ge 1$, 
\[ |f^{(n)}(z)| \le \sum_{k=1}^\infty B \left|\frac{d^n}{dz^n}z^k\right| \;=\; \frac{n!\,B}{(1-|z|)^{n+1}}\,.\]
In particular each derivative of a probability generating function is locally bounded on $\mathbb{D}$ and the bound does not depend on the probability distribution.
\end{lemma1}

{\highlight
\begin{lemma1}\label{L: X is C1}
 $X_t(z)$ is continuous as a function of $(t,z) \in [0,\infty) \times \overline{\mathbb{D}}$. Moreover, $X_t(z)$ is continuously differentiable as a function of $(t,z) \in \left([0,T_{gel}) \cup (T_{gel},\infty)\right) \times \mathbb{D}$.
\end{lemma1}
\begin{proof}
 According to Theorem~\ref{TRode}, $v_k(t)$ is a continuous function of $t$ for each $k \ge 1$. Since $v_k(t) \ge 0$ and $\sum_{k=1}^\infty v_k(t) =1$ for each $t$, Dini's theorem implies that $t \mapsto (v_k(t))_{k=1}^\infty$ is continuous as a map from $[0,\infty]$ to $\ell^1$. By Lemma~\ref{L: power series}, $|X_t(z) - X_{s}(z)| \le \|v(t) - v(s)\|_1$ for all $z \in \overline{\mathbb{D}}$, so $X_t(z)$ is continuous in $t$, uniformly in $z$, and continuous in $z$ for each $t$. It follows that $X_t(z)$ is jointly continuous as required. 
 
Lemma~\ref{L: power series} shows that $X_t'(z)$ is continuous in $z \in \mathbb{D}$, uniformly in $t$. For each fixed $z \in \mathbb{D}$, the power series $X_t'(z) = \sum_{k=1}^\infty k v_k(t) z^k$ is a uniform limit of continuous functions of $t$, therefore continuous in $t$ for each fixed $z \in \mathbb{D}$. Hence $X_t'(z)$ is continuous on $[0,\infty) \times \mathbb{D}$.
 
To see that $\dot{X}_t(z)$ is continuous on $\left([0,T_{gel}) \cup (T_{gel},\infty)\right) \times \mathbb{D}$, consider the right-hand side of equation \eqref{E: X equation}. Since $\varphi$ vanishes on $[0,T_{gel})$ and is continuous on $[T_{gel},\infty)$, both summands are jointly continuous in $t$ and $z$ in the given domain. Since both partial derivatives are continuous, we conclude that $X_t(z)$ is continuously differentiable on the same domain. 

 We remark that although $\dot{X}_t(z)$ has a jump at $t = T_{gel}$ when $z  \neq 0$, it has left and right limits that depend continuously on $z$.
\end{proof}
}

\subsection{Characteristic curves for the Smoluchowski equations}
\label{Tgelsec}

Recall that the Erd\H{o}s-R\'enyi case $\lambda_n = 0$ is described in the limit by the Smoluchowski coagulation equations with multiplicative kernel:
\begin{equation} \dot{s}_k(t) = -ks_k(t) + \sum_{l=1}^{k-1} l s_l(t)s_{k-l}(t)\quad \text{for $k \ge 1$.} \label{E: Smoluchowski}\end{equation} Even with general initial conditions these can be solved inductively, starting with $k = 1$, so it is easy to see that they have a unique solution. As a warm-up for the analysis later in this section, we describe how the solution can be given using generating functions and characteristic curves. The results in this section are not new, but it is useful to have them in our own terminology.

The infinite system of ODEs \eqref{E: Smoluchowski} is equivalent to the PDE 
\begin{equation}\label{E: Smoluchowski gen} \dot{S}_t(z) = z S'_t(z) ( S_t(z) - 1)\,,\end{equation} for the probability generating function $S_t(z) = \sum_{k=1}^\infty z^k s_k(t)$. Note that \eqref{E: Smoluchowski gen} is precisely \eqref{E: X equation} without control term $z\varphi(t)$. For any $0 \le w < 1$, define $\psi_w(t)$ for $t \ge 0 $ by
\[ \psi_w(t) = w e^{t(1 - S_0(w))}.\]
Then $\frac{d}{dt}{\psi}_w(t) = (1-S_0(w)) \psi_w(t)$ and while $\psi_w(t) < 1$ we have
\begin{align*}
\frac{d}{dt}S_t(\psi_w(t))&=\dot{S}_t(\psi_w(t)) + \frac{d}{dt}{\psi}_w(t)S'_t(\psi_w(t))\\ 
&=\psi_w(t) S'_t(\psi_w(t)) (S_t(\psi_w(t)) - S_0(w)) 
\end{align*}
Gr\"{o}nwall's inequality shows that the unique solution of the above equation is $S_t(\psi_w(t)) = S_0(\psi_w(0))$, so in fact $\psi_w$ is a characteristic curve of \eqref{E: Smoluchowski gen} and satisfies
$$\frac{d}{dt}{\psi}_w(t) = (1-S_t(\psi_w(t))) \psi_w(t).$$
Hence, to find $S_t(z)$ for some $t > 0$ and $z \in (0,1)$, we must find a value of $w$ for which $\psi_w(t) = z$, i.e. 
$$\log w  + t(1-S_0(w))  = \log z.$$ 
The left-hand side of the above equation is a concave function of $w$, its limit as $w \searrow 0$ is $-\infty$ and its limit as $w \nearrow 1$ is $0$, so there is a unique choice of $w \in (0,1)$ such that $\psi_w(t) = z$.  We write $S_0'(1-)$ for $\lim_{z \nearrow 1} S_0'(z)$. The mapping $w \mapsto \psi_w(t)$ is continuous and strictly increasing in $w$ as long as {\highlight $t < 1/wS_0'(w)$}. In particular it is a homeomorphism of $(0,1)$ onto itself for $t \le 1/S_0'(1-)$. But the characteristic curve $\psi_w(t)$ reaches $1$ when $t = \log w / (1 - S_0(w))$.  This means that $S_t(1) = 1$ for $t \le 1/S_0'(1-)$, but $S_t(1) < 1$ for $t > 1/S_0'(1-)$. Thus 
$T_{gel} = 1/S_0'(1-)$
is the \emph{gelation time}: Up to $T_{gel}$ the solution of \eqref{E: Smoluchowski} is \emph{conservative}, meaning that $\sum_{k=1}^\infty s_k(t) = 1$, but after $T_{gel}$ we have $\sum_{k=1}^\infty s_k(t) < 1$, indicating that mass has been lost into the giant component (which is also sometimes referred to as the gel).

\begin{lemma1}\label{L: E(t)}
For all $0 \le t < T_{gel}$, the limiting mean cluster size is given by 
\[ \sum_{k=1}^\infty k v_k(t) = (T_{gel} - t)^{-1}.\]
\end{lemma1}
\begin{proof}
It was remarked above that if $S_0 = V_0$ then the solution of equations \eqref{odecrit} and \eqref{odegeq2} coincides with the solution of the Smoluchowski coagulation equations \eqref{EReqs} up to the time $T_{gel} = \left(\sum_{k=1}^\infty v_k(0)\right)^{-1}$ and no later.

Define 
$$x_k(t) = \sum_{\ell=k+1}^\infty v_\ell(t),\hspace{1pc}E(t)=\sum_{k=1}^\infty k v_k(t)=\sum_{k=0}^\infty x_k(t).$$ Since $\sum_l l^3v_l(0)<\infty$, we have $E(0)<\infty$. Using \eqref{odegeq2} and \eqref{odecrit} we find that $x_0(t) = 1$ and for all $k \ge 1$ 
$$ \dot{x}_k(t) = \sum_{\ell=1}^{k} \ell v_\ell(t) x_{k-\ell}(t) - \varphi(t)\,.$$
It follows that for $t < T_{gel}$, when $\varphi(t) =0$, we have $\dot{x}_k(t) \ge 0$ for all $k \ge 0$, so $E(t)$ is increasing. A convergent series of increasing functions may be differentiated term-by-term and doing so shows that
$\frac{d E(t)}{dt} = E(t)^2.$
Given the initial condition $E(0) = \sum_{k} kv_k(0) = T_{gel}^{-1}$, this has the unique solution 
$E(t) = (T_{gel} - t)^{-1}.$
\end{proof}


\subsection{Characteristic curves for the critical forest fire equations}

We now move on to define characteristic curves for equation \eqref{E: X equation}.

\begin{lemma1}\label{charcurves}
For each $y>T_{gel}$ there exists a unique continuous function $\psi_y:[0,\infty)\to (0,1]$ such that $\psi_y(t)=1$ for all $t \ge y$, $\psi_y(t)<1$ for all $t < y$, and  
\begin{equation} \label{charcurveeq}
\frac{d\psi_y(t)}{dt}=\psi_y(t)\big(1-X_t(\psi_y(t))\big).
\end{equation}
{\highlight
$\psi_y(t)$ is increasing and continuously differentiable on $(0,\infty)$. The function $y \mapsto \psi_y(0)$ is continuous and strictly decreasing, mapping $(T_{gel},\infty)$ onto $(\gamma,1)$ for some $\gamma \in [0,1)$.
}
\end{lemma1}
{\highlight
\begin{remark1}\label{charcurvesconstructionrem}
We will construct the solution to \eqref{charcurveeq} by working backwards from time $y$ to time $T_{gel}$ and then from $T_{gel}$ to $0$. It is convenient to deal separately with the time intervals $[0,T_{gel}]$ and $[T_{gel}, y]$ because $X_t(\cdot)$ has an algebraic singularity at $1$, described by \eqref{E: sing est} and \eqref{E: sing est 2}, when $t\geq T_{gel}$, while $X_t(\cdot)$ has no singularity at $1$ when $t < T_{gel}$. The result of this is that distinct characteristic curves can coalesce at the value $1$, but coalescence occurs only after time $T_{gel}$, since the initial value problem given by \eqref{charcurveeq} with initial condition $\psi_y(0)$ has a unique solution up to time $y$.    
\end{remark1}
}
\begin{remark1}\label{charcurvesrem}
 The utility of the characteristic curve $\psi_y$ lies in the fact that
\begin{align}\label{E: point of charcurves} 
\frac{d}{dt}\left(X_t\left(\psi_y(t)\right)\right) 
&= X_t'(\psi_y(t))\frac{d\psi_y(t)}{dt} + \dot{X}_t\left(\psi_y(t)\right) \notag \\ 
&= \psi_y(t) \big( X_t'(\psi_y(t))(1- X_t(\psi_y(t)) + X_t'(\psi_y(t))(X_t(\psi_y(t)) - 1) + \varphi(t)\big) \notag \\ 
&= \psi_y(t)\varphi(t).
\end{align}
In particular, on $[0,T_{gel})$ where $\varphi \equiv 0$ we see that $X_t(\psi_y(t))$ is constant, so 
$$ X_t(\psi_y(t)) = X_0(\psi_y(0)) $$
and 
$ \frac{d}{dt}\psi_y(t) = \psi_y(t) \left(1 - X_0\left(\psi_y(0)\right)\right)$,
which implies that $ \psi_y(t) = \psi_y(0) e^{t\left(1 - X_0\left(\psi_y(0)\right)\right)}$.
\end{remark1}
\begin{remark1}\label{gammarem}
 We will show in Lemma~\ref{L: curves fill} that $\gamma = 0$, which is to say that $[0,\infty) \times (0,1)$ is filled by characteristic curves, but the proof will rely on Proposition~\ref{Ctdist}.
\end{remark1}
\begin{proof}[Of Lemma \ref{charcurves}]
Let $y>T_{gel}$. {\highlight The characteristic curve $\psi_y(t)$ is defined for $t \ge y$ by $\psi_y(t) = 1$. Our first task is to extend this solution continuously to $[T_{gel}, \infty)$ so that $\psi_y(t) < 1$ for $T_{gel} \le t < y$. To do this we will make a change of variable to remove the singularity, and apply Picard's theorem.  

We aim to express $\psi_y(t)$ in the form $\psi_y(t)=1-\upsilon_y(t)^2$, where $\upsilon_y(t): [T_{gel},y] \to [0,1)$ is continuous and strictly decreasing, satisfies $\upsilon_y(y) = 0$, and for $t \in (T_{gel},y)$ satisfies
\[ \frac{d}{dt}\upsilon_y(t) = \frac{1}{2}(\upsilon_y(t)^2-1)\,\frac{(1-X_t(1-\upsilon_y(t))^2)}{\upsilon_y(t)}\,. \] The point of this change of variable is to enable us to avoid constructing the constant solution $\psi_y(\cdot)=1$ of \eqref{charcurveeq}, which would correspond to $\upsilon_y(\cdot)=0$. Note that this solution would violate our condition that $\psi_y(t)<1$ for $t<y$. To see how the change of variable helps, note that} from equation \eqref{E: sing est} we have
\[ \frac{1- X_t(1-w^2)}{w} = \sqrt{2\varphi(t)} (1 + \mc{O}(w))\]
uniformly on $[T_{gel},y] \times (0,1]$. Hence, the function $F:[T_{gel},y]\times \R\to\R$ given by
$$F(t,w) = 
\begin{cases} 
1 & \text{for }w>1\\
\frac{1-X_t(1-w^2)}{w} & \text{for }w\in(0,1]\\ 
\sqrt{2\varphi(t)} & \text{for }w\leq 0
\end{cases}$$
is continuous and strictly positive on $[T_{gel},y]\times\R$. In fact, using \eqref{E: sing est 2} as well as \eqref{E: sing est} we have that uniformly on $(t,w) \in [T_{gel},y] \times (0,1]$,
\begin{align*}
\frac{\partial{F}(t,w)}{\partial{w}} 
&= 2X_t'(1-w^2) - w^{-2}(1 -X_t(1-w^2))\\
&= 2\sqrt{\frac{\varphi(t)}{2}} w^{-1} \big(1 + \mc{O}(w)\big) - \sqrt{2 \varphi(t)} w^{-1}\big(1+\mc{O}(w)\big)\\
&= \mc{O}(1)
\end{align*}
It follows that, within $[T_{gel},y]\times\R$, the function $F(t,w)$ is continuous with respect to $t$ and Lipschitz with respect to $w$, uniformly in $t$. Therefore, it follows from Picard's Theorem that there is a unique solution $\upsilon_y:[T_{gel},y]\to\R$ of the equation
\begin{equation}\label{upsilondef}
\upsilon_y(t) = \int_t^y \frac{1}{2}(1-\upsilon_y(s)^2)F(s,\upsilon_y(s))\,ds,\hspace{2pc}\upsilon_y(y)=0
\end{equation}
and, further, that $\upsilon_y(\cdot)$ is continuous.

{\highlight
We aim now to show that $0 < \upsilon_y(t) < 1$ for all $t \in [T_{gel},y)$, and that $\upsilon_y$ is strictly decreasing there. From \eqref{upsilondef} we obtain \begin{equation}\label{upsilonode}\frac{d}{dt}\upsilon_y(t) = \frac{1}{2}(\upsilon_y(t)^2-1)F(t,\upsilon_y(t))\,,\end{equation} and hence $\upsilon_y$ is also the (unique) solution of the equation
\begin{equation}\label{upsilonaltdef} f(t) = \upsilon_y(t_0) + \int_{t_0}^t \frac{1}{2}(f(s)^2-1)F(t,f(s))\,ds\,, \end{equation} for any $t_0 \in [T_{gel},y]$. 

Since $(w^2-1)F(t,w)\geq 0$ when $w\geq 1$ it follows that $\upsilon_y(t)<1$ for all $t \in [T_{gel},y]$, for if this were to fail at $t=t_0$ then~\eqref{upsilonaltdef} shows that $\upsilon_y$ would be increasing on $[t_0,y]$, contradicting $\upsilon_y(y)=0$. 

 We now show that $\upsilon_y(t) > 0$ for $t \in [T_{gel},y)$.  Suppose that $\upsilon_y(t) \le 0$ for some $t < y$. Then consider \[t_0 = \sup\{t \in [T_{gel},y)\set \upsilon_y(t) \le 0\}\,.\] We must have $t_0 < y$, since $\frac{d}{dt}\upsilon_y(t)\mid_{t=y} < 0$. Since $\upsilon_y$ is continuously differentiable by~\ref{upsilonode}, we must have $\upsilon_y(t_0) = 0$, and $\frac{d}{dt}\upsilon_y(t_0) \ge 0$ which contradicts~\eqref{upsilonode}.  Hence $0 < \upsilon_y(t) < 1$ for $T_{gel} \le t<y$, the integrand on the right-hand side of \eqref{upsilondef} is strictly positive, and $t\mapsto \upsilon_y(t)$ is strictly decreasing.
}

Since $0 < \upsilon_y(t) < 1$ for all $t \in [T_{gel},y)$, from the definition of $F$ we have
$$ \frac{d\upsilon_y(t)}{dt} = \frac{1}{2}(\upsilon_y(t)^2 - 1)\left(\frac{1 - X_t(1 - \upsilon_y(t)^2)}{\upsilon_y(t)}\right)\,.$$ 
We now define
$$\psi_y(t)=1-\upsilon_y(t)^2$$
and it follows that $\psi_y$ has the desired properties: it is continuous and strictly increasing on $[T_{gel},y]$ and satisfies \eqref{charcurveeq} there, $\psi_y(y)=1$, and $0 < \psi_y(t)<1$ for $t\in[T_{gel},y)$.  Note that $\psi_y(t) \to 1$ as $t \nearrow y$ and 
$$\frac{d}{dt}\psi_y(t) = -2\upsilon_y(t) \frac{d}{dt}\upsilon_y(t) \to 0 \quad \text{as $t \nearrow y$.} $$

Having constructed $\psi_y(t)$ on the interval $[T_{gel},y]$, we can extend the solution uniquely back from time $T_{gel}$ to time $t=0$, using Picard's theorem applied directly to equation \eqref{charcurveeq}. We use $\Psi=\psi_y(T_{gel})$, which has already been defined above, as our `initial' condition.

To this end, for $t\in[0,T_{gel}]$ we define
$$
G(t,z)=
\begin{cases}
\psi_y(T_{gel})(1-X_{t}(\psi_y(T_{gel}))) & \text{for }z> \psi_y(T_{gel})\\
z(1-X_t(z)) & \text{for }z\in[0,\psi_y(T_{gel})]\\
0 &\text{for }z<0.
\end{cases}
$$
Then $G(t,z)$ is continuous on $[0,T_{gel}]\times\R$ and for $z\in (0,\psi_y(T_{gel})$ and $t \in [0,T_{gel}]$ we have
\begin{align*}
\frac{\p G(t,z)}{\p z}=1-X_t(z)-zX'_t(z).
\end{align*}
Since $\psi_y(T_{gel})<1$, Lemma \ref{L: power series} implies that $X$ and $X'$ are uniformly bounded in $[0,T_{gel}]\times[0,\psi_y(T_{gel})]$. Hence $\frac{\p G(t,z)}{\p z}=\mc{O}(1)$, so $G(t,z)$ is Lipschitz in $z$, uniformly in $t$. Thus, by Picard's Theorem the equation
\begin{equation*}
\frac{d\psi_y(t)}{dt}=\psi_y(t)G(t,\psi_y(t)),\hspace{2pc}\psi_y(T_{gel})=\Psi
\end{equation*}
has a unique solution over $[0,T_{gel}]$. Since $G(t,z)\geq 0$ it follows that $\psi_y$ is increasing and since $G(t,z)=0$ for $z\leq 0$ it follows that $\psi_y(0)>0$. This completes the construction of the characteristic curves.

{\highlight
By construction $\psi_y$ is continuously differentiable on $(0,T_{gel})$, on $(T_{gel},y)$ and on $(y, \infty)$. Since it has matching left and right one-sided derivatives at $T_{gel}$ and at $y$, $\psi_y$ is continuously differentiable on $(0,\infty)$ as required. 

To prove the claims about the function $y \mapsto \psi_y(0)$, note first that $\frac{d}{dt}\psi_y(t) \le \psi_y(t)$ and hence for any $t < y$ we have $1 \ge \psi_y(t) \le \psi_y(0) e^t$. Letting $t \to y$ we find $\psi_y(0) \ge e^{-y}$, so $0 < \psi_y(0) < 1$ as claimed. The function $y \mapsto \psi_y(0)$ is strictly decreasing since otherwise we would have $y < y'$ such that $\psi_y(0) \ge \psi_{y'}(0)$, in which case there would be some maximal $s < y$ such that $\psi_y(s) = \psi_{y'}(s)$, at which there would be more than one solution to the initial value problem
\[\dot{f}(t) = f(t) \left(1 - X_t(f(t))\right),\quad f(s) = \psi_y(s)\,,\] contrary to Picard's Theorem.

We now show that $y \mapsto \psi_y(0)$ is a decreasing bijection by exhibiting its inverse. For each $z \in (0,1)$ there exists a unique solution $f_z$ of the initial value problem \[\dot{f}(t) = f(t) \left(1 - X_t(f(t))\right),\quad f(0) = z\,\] taking values in $(0,1)$, on some maximal domain $[0,y(z))$, since the right-hand side of this problem is locally Lipschitz. On the interval $[0, \min(y(z),T_{gel}))$ the same calculations as in Remark~\ref{charcurvesrem} show that $X_t(f_z(t))$ is constant in $t$ and hence $f_z(t) = z e^{t(1 - X_0(z))}$. Since $(1-X_t(f_z(t))$ is non-negative, the solution is increasing on $[0,y(z))$, so either $f_z(t) \nearrow 1$ as $t \nearrow y(z)$ or $y(z) = \infty$. In the former case the explicit solution on $[0, T_{gel}]$ shows that $y > T_{gel}$, and by the uniqueness proved above we must have $f_z = \psi_{y(z)}$ on $[0,y(z))$. In the latter case we would have $z < \psi_y(0)$ for every $y \in [T_{gel},\infty)$. Hence if we define $\gamma = \inf\{ z \in (0,1)\,:\, y(z) < \infty\}$, then we have exhibited an inverse mapping for $y \mapsto \psi_y(0)$, defined on the interval $(z,1)$.
} 
\end{proof}

\begin{remark1}
The method of Lemma \ref{charcurves} can also be used to construct the characteristic curves {\highlight $\xi_s(t)$ defined in equation (66) in Section 3.2 of \cite{RT2009}, using the relationship $\xi_s(t) = -\log \psi_s(t)$.} 
\end{remark1}

\begin{defn}\label{Cexpl}
We say that $C_t$ explodes at (the random) time $t\geq 0$ if $C_{t}=1$ and for some (random) $\epsilon>0$ $C_s\neq 1$ for all $s\in(t-\epsilon,t)$.
\end{defn}
{\highlight
 An equivalent definition is that $C_t$ explodes at time $t$ if and only if $C_t$ makes infinitely many jumps in $(s,t)$ for every $s < t$. In particular the event that $t$ is an explosion time and the number of explosions that occur in $[0,t]$ are both measurable with respect to $\mathcal{G}_{t-}$, where $\{G_t\}_{t \ge 0}$ is the filtration generated by the {\cadlag} process $C_t$.
}

\begin{lemma1}\label{L: explosion prob} For any $y > T_{gel}$ and $0 \le s < y$, we have
\begin{equation}\label{E: charcurve meaning}
Y_s(\psi_y(s)) = \mathbb{P}[C\text{ does not explode in }[s,y]\,]
\end{equation}
and $\mathbb{P}[C\text{ explodes at time }y] = 0$. 
{\highlight
Furthermore,
\[\mathbb{P}[C\text{ explodes during }[0,T_{gel}]\,] = 0\,.\]
}
\end{lemma1}
\begin{proof}
Fix a time $s \ge 0$ and define let $\tau_s = \inf\{ t > s \,:\, \text{ $C$ explodes at time $t$}\}$. Note that $\tau_s$ is a previsible stopping time.  Then the following process is defined for $u \in [s,y)$:
$$M_y(u) = 
\begin{cases} 
\psi_y(u)^{C_u} & \text{if }u < \tau_s, \\ 
0 &\text{if }u \ge \tau_s. 
\end{cases}
$$
{\highlight
In particular $M_y(s) = \psi_y(s)^{C_s}$, and $M_y$ is adapted to $\{\mathcal{G}_{t}\}$. We claim that $M_y(\cdot)$ is a $\mathcal{G}_t$-martingale.
 By conditioning on the first jump in $(t, t+\Delta)$ being of size $j$, we obtain 
\begin{eqnarray*}\mathbb{P}[\text{$C$ jumps at least twice in $(t,t+\Delta)$}]  & = & \int_{0}^\Delta ke^{-sk} \sum_{j=1}^\infty v_j(t+s) \left(1 - e^{-(\Delta-s)(k+j)}\right)\,ds\,\\
& = & k e^{-k\Delta} \int_0^\Delta 1 - X_{t+s}\left(e^{-(\Delta - s)}\right)\,ds \\
 & \le & k \Delta \sup_{0 \le s \le \Delta} \left(1 - X_{t+s}\left(e^{-\Delta}\right)\right)\,.
\end{eqnarray*}
By Dini's theorem, $X_t(e^{-\Delta})$ converges locally uniformly to $1$ as $\Delta \searrow 0$, so the last expression above is $o(\Delta)$.
}
It follows that conditional on $C_u = k$ and $u < \tau_s$ the drift of $M_y$ at time $u$ is 
\begin{align*} 
&k \psi_y(u)^{k-1} \frac{d}{du}\psi_y(u) + k \sum_{l=1}^\infty (\psi_y(u)^{k+l} - \psi_y(u)^k)v_l(u)\\ 
&\hspace{4pc}= k \psi_y(u)^{k-1} \left( \frac{d}{du}\psi_y(u) + \psi_y(u) \left( X_u(\psi_y(u)) - 1  \right)\right) \\ 
&\hspace{4pc}= 0.
\end{align*}
That the final line above is $0$ follows from Lemma \ref{charcurves}. Since $M_y(u)$ is bounded, by the martingale convergence theorem we may extend it to a martingale $M_y(u)$ defined for $u \in [s,y]$ that is a.s.~continuous at $u=y$.

Note that $C_u\nearrow\infty$ as $u\nearrow \tau_s$. Hence, by Lemma \ref{charcurves} if $\tau_s < y$ then $M_y(u) \to 0$ as $u \nearrow \tau_s$ and $M_y(u) = 0$ for $u \in [\tau_s, y)$. If $\tau_s > y$ then $M_y(u) \to 1$ as $u \nearrow y$. If $\tau_s = y$ then all we know is that $M_y(y) \in [0,1]$. Therefore
\[\mathbf{1}(\tau_s > y) \le M_y(y) \le \mathbf{1}(\tau_s \ge y)\,.\] Taking conditional expectations on $\mathcal{G}_u$,
$$ \mathbb{P}[\tau_s > y \,|\, \mathcal{G}_u] \le M_y(u) \le   \mathbb{P}[\tau_s \ge y \,|\, \mathcal{G}_u].$$
Hence, for any $y' > y$, we have
$$ \psi_{y'}(s)^{C_s}  \le  \mathbb{P}[\tau_s \ge y' \,|\, \mathcal{G}_s] \le  \mathbb{P}[\tau_s > y \,|\, \mathcal{G}_s]   \le  \psi_y(s)^{C_s}.$$
Taking expectations of the above equation, we obtain
$$ Y_s\left(\psi_{y'}(s)\right) \le \mathbb{P}\left[\tau_s \ge y'\right] \le \mathbb{P}\left[\tau_s > y\right] \le Y_s\left(\psi_y(s)\right).$$
 By Lemma~\ref{charcurves} we can choose $y'$ so as to make $\psi_{y'}(s)$ as close as we like to $\psi_y(s)$, and it follows that $\mathbb{P}[\tau_s = y] = 0$. We obtain also that
\begin{equation}\label{psi and tau} Y_s\left(\psi_y(s)\right) = \mathbb{P}[\text{$C$ does not explode in $(s,y)$}]  =   \mathbb{P}[C\text{ does not explode in }(s,y]\,]\,.\end{equation}
{\highlight
Finally, to show that $C$ almost surely does not explode in $[0,T_{gel}]$, by Lemma~\ref{charcurves} we have $\psi_y(0) \nearrow 1$ as $y \searrow T_{gel}$, and $\lim_{z \nearrow 1}Y_0(z) = 1$, so
\begin{eqnarray*} \mathbb{P}[C\text{ does not explode during }[0,T_{gel}]\,] & = & \lim_{y \searrow T_{gel}}\mathbb{P}[C\text{ does not explode during }[0,y]\,] \\ & = & \lim_{y \searrow T_{gel}} Y_0(\psi_y(0)) \;=\; 1\,.
\end{eqnarray*}
}
\end{proof}

\begin{remark1} \label{psiexplode}
By Lemma \ref{L: explosion prob}, if $\mathbb{P}(C_s = 1) > 0$, which always holds for $s > T_{gel}$, then from \eqref{psi and tau} we obtain 
\begin{align}\label{charexpl}
\psi_y(s) 
&= \mathbb{P}[\text{$C$ does not explode in $(s,y)$} \,|\, C_s = 1] \notag\\
&= \mathbb{P}[\text{$C$ does not explode in $(s,y]$}\,|\,C_s = 1].
\end{align}
If $v_1(0) = 0$ then $\mathbb{P}(C_s = 1) =  0$ for $s \le T_{gel}$, in which case conditioning on $C_s = 1$ does not make sense. In this case we can consider a modified version $\hat{C}_t$ of $C_t$ that is started in state $1$ at time $s$, and the same argument shows that \eqref{charexpl} holds with $\hat{C}$ in place of $C$.
\end{remark1}

\begin{corollary1}\label{Ypsitto1}
Let $y > T_{gel}$. Then $Y_s\left(\psi_y(s)\right) \nearrow 1$ as $s \nearrow y$. 
\end{corollary1}
{\highlight
\begin{proof} 
 This follows from Lemma~\ref{L: explosion prob} on applying the dominated convergence theorem to the indicator functions
 $\mathbf{1}(\text{$C$ does not explode in $[s,y]$})$ as $s \nearrow y$.
\end{proof}
}

\subsection{Evolution of the watched cluster distribution}\label{evolutionsec}

We now seek an analogue of equation~\eqref{v1eq} for $u_1$. For $t \in [0,\infty)$ define 
$$\Phi(t)=\E\left[\#\{s\in[0,t)\, : \,C\text{ explodes at time }s\}\right]\,.$$
Recall that $C$ spends an exponential time of mean $1$ in state $1$ after each explosion. Thus we can stochastically bound the number of explosions in $[0,t]$ by $1$ plus a Poisson process of rate $1$,  hence $\Phi(t) \le 1 + t$ for all $t$. Lemma~\ref{L: explosion prob} implies that $\Phi$ is continuous, but we have not yet shown that $\Phi$ is differentiable, so we cannot write down a differential equation for $u_1$. For this reason, it is convenient instead to use integral equations to describe the evolution of $(u_k)$.  By examining the transitions of $C$ we obtain 
\begin{align*}
u_1(t)&=v_1(0)-\int_0^tu_1(s)\,ds + \Phi(t),\\ 
u_k(t)&=v_k(0)-\int_0^tku_k(s)\,ds + \int_0^t\sum\limits_{l=1}^{k-1}lu_l(s)v_{k-l}(s)\,ds\,\quad \text{for }k \ge 2.
\end{align*} 
 Note that the appearance of $v_{k-l}$ corresponds to the fact that each time $C$ jumps it increases by a sample of $l\mapsto v_l(t)$. From the above two equations, for $|z| < 1$ we obtain
\begin{equation}Y_t(z)=X_0(z)-\int_0^t zY'_s(z)(1-X_s(z))\,ds + z\Phi(t)\,.\phantom{aaaaa}\label{Yeq}
\end{equation} 
Similarly, from \eqref{v1eq} and Theorem \ref{TRode} we can show that 
\begin{equation} \label{Xeq}
  X_s(z)=X_0(z)+\int_0^tzX'_s(z)(1-X_s(z))\,ds + z\int_0^t \varphi(s)\,ds.
 \end{equation}  
Combining \eqref{Yeq} and \eqref{Xeq} and using the initial condition $Z_0 = Y_0 - X_0 = 0$ we obtain
\begin{equation} Z_t(z)=zI(t)-\int_0^tzZ'_s(z)(1-X_s(z))\,ds
\label{Zeq}
\end{equation}
where $I$ is the continuous function defined by
$$I(t)=\Phi(t)-\int_0^t\varphi(s)\,ds.$$
Since the integrand in \eqref{Zeq} is bounded (by Lemma~\ref{L: power series}), we see that $Z_t(z)$ is continuous in $t$ for each fixed $z \in \mathbb{D}$. Differentiating \eqref{Zeq} under the integral we find that for $|z| < 1$ we have
\begin{equation} Z'_t(z) = I(t)-\int_0^t \frac{d}{dz}\left[zZ'_s(z)(1-X_s(z))\right]\,ds\,.
\label{Zdasheq}
\end{equation}
To justify this by showing that the integral is absolutely convergent, expand the derivative and apply Lemma~\ref{L: power series} to bound the result in terms of $|z|$, independently of $t$. This also shows that for each fixed $z$ with $|z| < 1$, $Z'_t(z)$ is a continuous function of $t$, and then Lemma~\ref{L: power series} implies that $Z_t'(s)$ is jointly continuous on $[0,\infty) \times \mathbb{D}$.

For $t\in[0,\infty)$ and $|z|<1$ define
\begin{equation}\label{Rdef}
R_t(z)=Z_t(z)-zI(t)=-\int_0^t zZ'_s(z)(1-X_s(z))\,ds.
\end{equation}
We are aiming to show that both $Z$ and $I$ are identically zero. For each fixed $z$ with $|z| < 1$, we see from the integral expression in \eqref{Rdef} that $R_t(z)$ is differentiable with respect to both $t$ and $z$, satisfying
\begin{eqnarray}\label{Rder}
\dot{R}_t(z) & = & -zZ'_t(z)(1-X_t(z))\,, \\
\label{Rder2} R_t'(z) & = & Z_t'(z) - I(t)\,.\end{eqnarray}
Hence $R'$ is continuous on $[0,\infty) \times \mathbb{D}$. Using \eqref{Rder} with Lemma~\ref{L: X is C1} we find also that $\dot{R}_t(z)$ is jointly continuous in $t$ and $z$ and hence $R_t(z)$ is continuously differentiable on $[0,\infty) \times \mathbb{D}$.
 
\begin{lemma1}\label{Rpsitto1}
Let $y>T_{gel}$. Then $R_y(\psi_y(t)) \to -I(y)$ as $t \nearrow y$. \end{lemma1}
\begin{proof}
By definition, $R_y(z) = Y_y(z) - X_y(z) - zI(y)$. By Lemma~\ref{L: X is C1} and Lemma~\ref{L: explosion prob} we know that $X_t(z)$ and $zI(t)$ are both jointly continuous in $(t,z) \in [0,\infty) \times [0,1]$. From Lemma \ref{charcurves} we have $\psi_y(t)\nearrow 1$ as $t\nearrow y$, so $X_t(\psi_y(t)) \to X_y(1) = 1$ as $t \nearrow y$. Corollary \ref{Ypsitto1} gives $Y_t(\psi_y(t)) \to 1$ as $t \nearrow y$, so
$$R_y(\psi_y(t))  = Y_y(\psi_y(t))-X_t(\psi_y(t))-\psi_y(t)I(t) \to 1 - 1 -I(y)$$ as $t \nearrow y$, as required. 
\end{proof}

We are now in a position to prove Proposition \ref{Ctdist}. Recall that Proposition \ref{Ctdist} stated that for all $t\in[0,\infty)$ and all $l\in\N$, $\Prob\left[C_t=l\right]=v_l(t)$.

\begin{proof}[Proof of Proposition \ref{Ctdist}.]
Let $y>T_{gel}$. By Lemma \ref{charcurves} we have $\psi_y(t)\in[0,1)$ for $t<y$. Let 
$$\eta_y(t)=R_t(\psi_y(t))$$
for all $y>T_{gel}$ and $t\in[0,y)$. 

Combining the continuity and continuous differentiability of $\psi_y(\cdot)$ proved in Lemma \ref{charcurves} with the properties of $R_t(z)$ proved above, we find that $t\mapsto \eta_y(t)$ is continuous on $[0,y)$ and continuously differentiable on $(0,y)$. Using \eqref{Rder}, \eqref{Rder2} and \eqref{charcurveeq} we compute, for $0 < t < y$,
\begin{align}
\frac{d\eta_y(t)}{dt}&=\dot{R}_t(\psi_y(t)) + \frac{d\psi_y(t)}{dt}R'_t(\psi_y(t)))\notag\\
&=-\psi_y(t)Z'_t(\psi_y(t))(1-X_t(\psi_y(t)))+\psi_y(t)Z'_t(\psi_y(t))(1-X_t(\psi_y(t)))-\frac{d\psi_y(t)}{dt}I(t)\notag\\
&= - \psi_y(t)(1-X_t(\psi_y(t))I(t).\label{etader}
\end{align}
Hence for all $t \in [0,y)$ we have 
$$\eta_y(t)-\eta_y(0)= - \int_0^t\psi_y(s)(1-X_s(\psi_y(s)))I(s)ds.$$
Using \eqref{Rdef} and Lemma \ref{Rpsitto1} we have 
$\eta_y(0)=R_0(\psi_y(0))=0$ and $\lim_{t\nearrow y}\eta_y(t) = -I(y).$
Hence
$$I(y)=\int_0^y\psi_y(s)(1-X_s(\psi_y(s)))I(s)ds$$
which implies that
$$|I(y)|\leq\int_0^y|I(s)|ds.$$

The above equation holds for all $y>T_{gel}$. By Lemma \ref{L: explosion prob} and \eqref{v1eq} we have $I(y)=0$ for all $y<T_{gel}$. Using Gr\"{o}nwall's inequality, this shows that $I$ is identically zero. Hence from \eqref{etader} we have $\frac{d\eta_y(t)}{dt}=0$ and since $\eta_y(0)=0$ we have $\eta_y=0$. Hence, from \eqref{Rdef}, for all $y>T_{gel}$ and $t\in[0,y)$ we have $Z_t(z)=zI(t)=0$ for all $z\in[\psi_y(t),1)$. By Lemma \ref{charcurves} and the identity theorem this implies that $Z_t$ is identically zero for each $t < y$. Since $y > T_{gel}$ was arbitrary, this shows that for every $t \in [0,\infty)$ we have $Y_t = X_t$ and hence $u_k(t)=v_k(t)$ for all $k \in \N$. 
\end{proof}

{\highlight
\subsection{$C_t$ almost surely explodes infinitely often}\label{explodesiosec}

We are now in a position to establish an important property of $C$, namely that it explodes infinitely often.

\begin{lemma1}\label{L: average burn rate} For all $t \ge 0$, 
\begin{equation}
\mathbb{E}(1/C_t) = \mathbb{E}(1/C_0) + \int_{0}^t \left(\varphi(s) - 1/2\right) \, ds\,.\label{phiconvweak}
\end{equation}
\end{lemma1}
\begin{proof} By Proposition~\ref{Ctdist} and Fubini's theorem, \[\mathbb{E}(1/C_t) = \sum_{k=1}^\infty \frac{v_k(t)}{k} = \int_0^1 \frac{X_t(z)}{z} \,dz\,.\] 
Hence, using \eqref{E: X equation} and Fubini's theorem again,
\begin{eqnarray*}\mathbb{E}(1/C_t) - \mathbb{E}(1/C_0) & = & \int_0^1 \frac{X_t(z) - X_0(z)}{z}\,dz \; =\;  \int_0^1 \int_0^t \frac{\dot{X}_t(z)}{z} \,dt\,dz \\
 & = & \int_0^1 \int_0^t X_t'(z)\left(X_t(z) - 1\right) + \varphi(t)\,dt\,dz\\
 & = & \int_0^t \int_0^1 \frac{d}{dz}\left(\frac{X_t(z)^2}{2} - X_t(z)\right) \,dz + \varphi(t) \, dt\,,
\end{eqnarray*}
This gives the desired result since $X_t(1) = 1$ and $X_t(0) = 0$ for all $t$.
\end{proof}
\begin{lemma1}\label{L: curves fill}
 For every $t \ge 0$, $\lim_{y \to \infty} \psi_y(t) = 0$. The characteristic curves $\psi_\cdot(\cdot)$ fill $[0,\infty) \times (0,1)$ and 
$C_t$ almost surely explodes infinitely often.
\end{lemma1}
\begin{proof}
 For any $y > T_{gel}$ and $0 \le t \le s < y$ we have
 \[ 1 \,\ge\, X_s\left(\psi_y(s)\right) \,=\, X_t\left(\psi_y(t)\right) + \int_{t}^s \psi_y(u)\varphi(u) \,du\, \ge \, \psi_y(t)\int_t^s \varphi(u)\,du\,.\]
 By Lemma~\ref{L: average burn rate} we have 
 \[ \int_t^s \varphi(u) \,du = \frac{s-t}{2} + \mathbb{E}\left(1/C_s\right) - \mathbb{E}\left(1/C_t\right) \ge \frac{s-t}{2} - 1\,,\]
 so letting $s \nearrow y$ we obtain
 \[0 \le \psi_y(t) \le \frac{1}{\frac{y-t}{2} - 1}\,.\]
Hence for every $t \ge 0$ and $0 < z < 1$ we can find $y$ large enough to ensure $\psi_y(t) < z$. Now it follows by the arguments used to prove Lemma~\ref{charcurves} that there exists $y > T_{gel}$ such that $\psi_y(t) = z$. 
 
 To conclude that $C_t$ almost surely explodes infinitely often, we use Lemma~\ref{L: explosion prob}, to see that
 \[\mathbb{P}(\text{$C$ does not explode after time $t$}) = \lim_{y \to \infty} Y_t(\psi_y(t)) = Y_t(0) = 0\,\]
as required.
\end{proof}

\begin{remark1}
Note that, since $\frac{1}{C_t}\in(0,1]$, \eqref{phiconvweak} establishes a weak sense in which $\varphi(t)$ approaches $\frac{1}{2}$ as $t\to\infty$. We conjecture that, in fact, $\varphi(t)\to\frac{1}{2}$ as $t\to\infty$.
\end{remark1}
}

\section{Coupling}
\label{couplingsec}

In this section we will prove Theorem \ref{CntoC} by coupling the pair $(C^n,C)$ so that, for any fixed $T>0$, if we take $n$ sufficiently large then, with probability close to $1$ we have $C^n(\cdot)=C(\cdot)$ for all but a small proportion of the time interval $[0,T]$ and at the exceptional times $C^n(\cdot)$ and $C(\cdot)$ are nevertheless close in the compact state space $E$. The coupling divides into two parts:
\begin{enumerate}
\item If $C$ and $C^n$ are small, and equal, in size then their respective jump rates are exponential random variables of similar rate and we can use Theorem \ref{TRthmupg} to couple the size of the correspond jumps. Note that this incurs a small probability of failure, caused by the jump rates not quite being equal and by $v^n$ and $v$ (which define the jump distributions) being not quite equal.
\item Eventually $C$ and $C^n$ become large enough that probably of failure incurred above is too high to control. At this point we rely on conservativity \eqref{odecrit}, which combined with Proposition \ref{Ctdist} and Theorem \ref{TRthmupg} implies that a cluster which is already large will burn quickly (in both $C$ and $C^n$). Once they burn, with high probability they stay in state $1$ for long enough to enable recoupling and we can once again use Theorem \ref{TRthmupg}.
\end{enumerate}
We now proceed with the argument, which is rather involved. Our first step is to show that large clusters burn quickly and once that is done we will construct the coupling.

\begin{lemma1}\label{bigdiefast1}
Let $\epsilon>0$. Let $T>0$ with and $\delta\in(0,T)$. Then there exists $K,N\in\N$ such that for all $n\geq N$ and all $k\geq K$,
$$\Prob\left[\exists t\in [\delta,T]\text{ such that }\inf\limits_{s\in [t-\delta,t]}C^n_s>k\right]<\epsilon.$$
\end{lemma1}
\begin{proof}
Let $T,\epsilon>0$, $\delta\in(0,T)$ and $A_{k,\delta}=\left\{\exists t\in [\delta,T]\text{ such that }\inf_{s\in [t-\delta,t]}C^n_s>k\right\}.$
On the event $A_{k,\delta}$ we have $\int_0^T \1\{C^n_s>k\}ds\geq\delta$ and thus
\begin{equation}\label{bigdiefasteq1}
\delta\,\Prob\left[A_{k,\delta}\right]\leq \E\left[\int_0^T\1\{C^n_s>k\}ds\right].
\end{equation}

We now seek an upper bound on the right hand side of \eqref{bigdiefast1}. Since our tagged vertex $p$ was sampled uniformly from $[n]$ we have $P\left[C^n_s=j\right]=\E\left[v^n_j(s)\right]$. Hence, 
\begin{align}
\E\left[\int_0^T\1\{C^n_s<k\}\,ds\}\right]&=T-\E\left[\int_0^T\1\{C^n_s\leq k\}\,ds\right]\notag\\
&=T-\sum\limits_{j=1}^k\E\left[\int_0^T v^n_j(s) \,ds\right]\label{bigdiefasteq2}.
\end{align}
By Theorem \ref{TRthmupg} and the fact that $v^n_j(t)\in[0,1]$, for each $j\in\N$ we have
\begin{equation}\label{vnconvmoll}
\E\left[\int_0^T v^n_j(s) \,ds\right]\to\int_0^T v_j(s)\,ds\, \quad \text{as $n\to\infty$}\,.
\end{equation}
Since $\sum_{j=1}^\infty v_j(s)=1$ we have 
$T=\int_0^T\sum_{j=1}^\infty v_k(s)\,ds = \sum_{j=1}^\infty \int_0^T v_k(s)\,ds.$
Hence we can choose $K\in\N$ such that for all $k\geq K$,
\begin{equation}\label{bigdiefasteq3}
T-\sum\limits_{j=1}^k\int_0^T v_j(s) \,ds <\frac{\epsilon\delta}{2}.
\end{equation}
Using \eqref{vnconvmoll}, we choose $N\in\N$ such that for all $j=1,\ldots,K$ and $n\geq N$, we have
$$\left|\E\left[\int_0^T v^n_j(s)\,ds\right]-\int_0^T v_j(s)\,ds\right|<\frac{\epsilon\delta}{2K}.$$
Putting the above equation and \eqref{bigdiefasteq3} into \eqref{bigdiefasteq2} we get
$$\E\l[\int_0^T\1\{C^n_s<k\}\,ds\r]< \epsilon\delta.$$ 
Combining this equation with \eqref{bigdiefast1} obtains the stated result.
\end{proof}

\begin{lemma1}\label{bigdiefast2}
Let $\epsilon>0$. Let $T>0$ and $\delta\in(0,T)$. Then there exists $K\in\N$ such that for all $k\geq K$,
$$\Prob\left[\exists t\in [\delta,T]\text{ such that }\inf\limits_{s\in [t-\delta,t]}C_s>k\right]<\epsilon.$$
\end{lemma1}
\begin{proof}
Let $\epsilon,T>0$ and $\delta\in[0,T]$. By Proposition \ref{Ctdist} we have $\Prob\left[C_s=j\right]=v_j$ for all $s$. We set
$A_{k,\delta}=\left\{\exists t\in [\delta,T]\text{ such that }\inf_{s\in [t-\delta,t]}C_s>k\right\}$
and then, in similar style to \eqref{bigdiefasteq1} and \eqref{bigdiefasteq2}, we obtain 
\begin{align*}
\delta\,\Prob\left[A_{k,\delta}\right]&\leq \E\left[\int_0^T\1\{C_s>k\}\,ds\right]=T-\sum\limits_{j=1}^k\int_0^T v_j(s)\,ds.
\end{align*}
By \eqref{odecrit} we have
$T=\sum_{j=1}^\infty\int_0^T v_j(s)ds$
so we can choose $K\in\N$ such that for all $k\geq K$,
$$T-\sum_{j=1}^k\int_0^Tv_j(s)ds<\epsilon\delta$$
and we are done.
\end{proof}

Our next step is to construct a coupling between $C$ and $C^n$ (where $n$ is still to be chosen). In fact we will define a {\cadlag} $E$ valued process $\wt{C}$ which has the same distribution as $C$ and is coupled to $C^n$. Our coupling will be such that with probability close to $1$ the distance $d_E(\wt{C}_t,C^n_t)$ remains small up until a given large time $T$. We will also define a process $S$, taking values in $\{\vec{0},\vec{1}\}$. The process $S$ acts as a \textit{state bit}; for as long as we can keep $\wt{C}$ and $C^n$ close we have $S=\vec{0}$. The time 
$$\tau=\inf\{s>0\set S_s=\vec{1}\}$$
records the time at which our coupling fails to keep $\wt{C}$ and $C^n$ close. 

{\highlight
At this point we make a minor modification to our model $\mc{Z}^n$:
\begin{itemize}
\item[$(\dagger)$] Growth clocks corresponding to (unordered) pairs $(i,j)$ where $i,j\in\mc{C}^n$ and $i\neq j$ ring at twice their normal rate (i.e.~at rate $\frac{2}{n}$ instead of $\frac{1}{n}$).
\end{itemize}
This modification has no effect on the clusters which form in $\mc{Z}^n$, since any such pair $(i,j)$ were already part of the same connected cluster. We may assume $(\dagger)$ without any change to the dynamics of the partition of vertices into clusters. We are not interested here in the internal structure of the clusters, so we assume $(\dagger)$ from now on. Thanks to $(\dagger)$, we obtain the following properties: 
\begin{itemize}
\item[(A)] Given that $C^n_t=k$, the time until one of the growth clocks of some $(i,j)$ with $i\in\mc{C}^n_t$ next rings is exponential with rate $k$.
\item[(B)] When the corresponding edge appears, at time $t'$, the effect is that a vertex $i$ is sampled uniformly from $\mc{C}^n_{t'-}$ and a second vertex $j$ is sampled uniformly from $[n]$ (and an edge is created between these two vertices). 
\end{itemize}

\begin{remark1}\label{R: exact growth rate}
Without $(\dagger)$, given $C^n_t=k$, the time until one of the growth clocks of some $(i,j)$ with $i\in\mc{C}^n_t$ rings would be exponential with rate 
\begin{equation}\label{Rnk}
R^n_k=\frac{1}{n}\l(k(n-k)+k+\binom{k}{2}\r).
\end{equation}
This does not match the jump rate of $\wt{C}^n$, causing additional error terms that we would then need to control (using that fact that $R^n_k\to k$ as $n\to\infty$). Further, without $(\dagger)$ we would not have property (B) and this would create yet more error terms.
\end{remark1}

Let $H(i,j)$ denote the growth clock for the (unordered) pair $(i,j)$. Let $I_{0,0}=0$ and define inductively
\begin{align*}
I_{a+1,0}&=\inf\{s>I_{a,0}\set \mc{C}^n\text{ is burned at time }s\}\\
I_{a,b+1}&=\inf\{s>I_{a,b}\set s<I_{a+1,0}\text{ and }\exists i\in \mc{C}^n_{s-}, j\in\N \text{ such that }H(i,j)\text{ rings at }s\}.
\end{align*}
Thus, for $a>0$, $I_{a,0}$ is the $a^{th}$ time at which $\mc{C}^n$ burns and $I_{a,b}$ is the $b^{th}$ time after its $a^{th}$ burn at which a growth clock of some $(i,j)$ with at least one of $i$ and $j$ currently in $\mc{C}^n$ rings. Note that the (random) set $\mathscr{I}=\{I_{a,b}\set a,b\geq 0, I_{a,b}\text{ is defined}\}$ is well ordered in time with respect to the lexicographic integer ordering of the indexes $(a,b)$. With slight abuse of language, we say that $I_{a,b}\in\mathscr{I}$ is a fire if $b=0$ and growth if $b>0$.

Consider the time $I_{a,b}$, for $b\geq 1$, corresponding to, say, the growth clock $H(i_{a,b},j_{a,b})$ where by property (B) we have that (conditionally on $\mc{Z}^n$ prior to $I_{a,b}$, and independently of all else) $i_{a,b}$ is sampled uniformly from $\mc{C}^n_{I_{a,b}-}$ and $j_{a,b}$ is sampled uniformly from $[n]$. It is advantageous to introduce some extra notation with which to carry out the sampling of $j_{a,b}$. We recall that $\mc{Z}^n$ is invariant under permutations of the labels $[n]$ of the vertices, so (for convenience) at all times we label the vertices in increasing order of cluster size. To sample $j_{a,b}$, we sample a uniform random variable $U_{a,b}$ on $[0,1]$, independently of all else, and we set 
$$j_{a,b}=\min\l\{j'\in[n]\set U_{a,b}<\frac{j'}{n}\r\}.$$ 
The value of 
$$L_{a,b}=C^n_{I_{a,b}-}(j_{a,b})$$ 
is then taken to be
\begin{equation}\label{paintbox}
\sum\limits_{l=1}^{L_{a,b}}v^n_l(I_{a,b}-)\leq U_{a,b} < \sum\limits_{l=1}^{L_{a,b}+1}v^n_l(I_{a,b}-),
\end{equation}
which simply states that $C^n_{I_{a,b}-}(j_{a,b})$ has conditional distribution $l\mapsto v^n_l(I_{a,b}-)$. We additionally sample a random variable $\wt{L}_{a,b}$ given by
\begin{equation}\label{paintbox2}
\sum\limits_{l=1}^{\wt{L}_{a,b}}v^n_l(I_{a,b}-)\leq U_{a,b} < \sum\limits_{l=1}^{\wt{L}_{a,b}+1}v^n_l(I_{a,b}-).
\end{equation}
Thus $\wt{L}_{a,b}$ has conditional distribution $l\mapsto v_l(I_{a,b})$.

We define
$$\mathscr{I}=\{I_{a,b}\set a,b\geq 0\text{ and }(a,b)\neq(0,0)\}.$$
For any $t\in[0,\infty)$ we define
$$t^\oplus=\min\{s\in\mathscr{T}\set s>t^{\oplus}\}.$$
Note that (almost surely) each time $t^\oplus$ is either a fire or a growth.
}

Let $K\in\N$ be arbitrary, for now; it will be given a fixed value later. We partition $\{\vec{0},\vec{1}\}\times E\times E$ into six subsets, corresponding to six cases in the definition of the joint evolution of $(S_t,C^n_t,\wt{C}_t)$. These are
\begin{align*}
E_1&=\{\vec{0}\}\times\{(k,k)\set k\leq K\}\\
E_2&=\{\vec{0}\}\times\{(k,\overline{k})\set k,\overline{k}> K\}\\
E_3&=\{\vec{0}\}\times\{(k,1)\set k> K\}\\
E_4&=\{\vec{0}\}\times\{(1,\overline{k})\set \overline{k}> K\}\\
E_5&=\{\vec{1}\}\times E\times E\\
E_6&=(\{0,1\}\times E\times E)\setminus\cup_{m=1}^5E_m.
\end{align*}

{\highlight
In similar style to \eqref{paintbox}, at time $0$ we sample a uniform random variable $U$ on $[0,1]$, take our watched point $p$ to be $p=\min\{p'\in[n]\set U<\frac{p'}{n}\}$ and take the size $C^n_0$ of our watched cluster to be
\begin{equation}\label{Cninit}
\sum\limits_{l=1}^{C^n_0}v^n_l(0)\leq U < \sum\limits_{l=1}^{C^n_0+1}v^n_l(0).
\end{equation}
Similarly, we define the initial state of $\wt{C}$ by
\begin{equation}\label{Cinit}
\sum\limits_{l=1}^{\wt{C}_0}v_l(0)\leq U < \sum\limits_{l=1}^{\wt{C}_0+1}v_l(t).
\end{equation}
}
If $C^n_0=\wt{C}_0$ then we set $S_0=\vec{0}$, otherwise we set $S_0=\vec{1}$. The evolution of $(S_t,C^n_t,\wt{C}_t)$ then proceeds as follows; the infinitesimal evolution is different depending on which $E_i$ the process $(S,C^n,\wt{C})$ is in.
\begin{itemize}
\item If $(S_t,C^n_t,\wt{C}_t)=(\vec{0},k,k)\in E_1$ then, at time $t^{\oplus}$,
\begin{itemize}
\item if $t^\oplus=I_{a,b}$ is a growth, and both $j_{a,b}\notin \mc{C}^n_{t^{\oplus}}$ and $L_{a,b}=\wt{L}_{a,b}$, it jumps to $(\vec{0},C^n_t+L_{a,b},\wt{C}_t+\wt{L}_{a,b})$.
\item if $t^\oplus=I_{a,b}$ is a growth, and both $j_{a,b}\notin \mc{C}^n_{t^{\oplus}}$ and $L_{a,b}\neq\wt{L}_{a,b}$, it jumps to $(\vec{1},C^n_t+L_{a,b},\wt{C}_t+\wt{L}_{a,b})$.
\item if $t^\oplus=I_{a,b}$ is a growth, and $j_{a,b}\in \mc{C}^n_{t^{\oplus}}$, it jumps to $(\vec{1},C^n_t,\wt{C}_t+\wt{L}_{a,b})$.
\item if $t^\oplus$ is a fire, it jumps to $(\vec{1},1,\wt{C}_t)$. 
\end{itemize}

\item While $(S,C^n,\wt{C})=(\vec{0},k,\overline{k})\in E_2$, the processes $C^n$ and $\wt{C}^n$ evolve independently of one another. The evolution of $C^n$ is already specified, $S$ remains constant at $\vec{0}$ and the evolution of $\wt{C}$ is that
\begin{itemize}
\item if $\wt{C}_t=k$, then at rate $k$ it jumps to $\wt{C}_t+L$, where $L$ is sampled from $v_l(\alpha)$ and $\alpha$ is the jump time.
\end{itemize}
Note that this could result in infinitely many jumps of $\wt{C}$ inside $E_2$, in which case, by definition of $E$, at the time of the accumulation point of these jumps $\wt{C}$ enters state $1$.

\item While $(S_t,C^n_t,\wt{C}_t)=(\vec{0},k,1)\in E_3$ then, 
\begin{itemize}
\item At rate $1$, it jumps to $(\vec{1},C^n_t,\wt{C}_t+L)$,  where $L$ is sampled from $v_l(\alpha)$ and $\alpha$ is the jump time.
\end{itemize}
If $t^\oplus$ occurs before this (potential) jump then, at time $t^{\oplus}$,
\begin{itemize}
\item if $t^\oplus=I_{a,b}$ is a growth, it jumps to $(\vec{0},C^n_t+L_{a,b}\1\{j_{a,b}\notin \mc{C}^n_{t^\oplus}\},1)$.
\item if $t^\oplus$ is a fire, it jumps to $(\vec{0},1,1)$.
\end{itemize}

\item While $(S_t,C^n_t,\wt{C}_t)=(\vec{0},1,\overline{k})\in E_4$, the processes $C^n$ and $\wt{C}^n$ evolve independently of one another. The evolution of $\wt{C}$ is that
\begin{itemize}
\item if $\wt{C}_t=k$, then at rate $k$ it jumps to $\wt{C}_t+L$, where $L$ is sampled from $v_l(\alpha)$ and $\alpha$ is the jump time. In this case $S$ and $C^n$ remain constant.
\end{itemize}
If $t^\oplus$ occurs before this (potential) jump then, at time $t^\oplus$,
\begin{itemize}
\item if $t^\oplus=I_{a,b}$ is a growth, it jumps to $(\vec{1},1+L_{a,b}\1\{j_{a,b}\notin \mc{C}^n_{t^\oplus}\},\wt{C}_t)$.
\item if $t^\oplus$ is a fire, there is no change.
\end{itemize}

\item While $(S,C^n,\wt{C})\in E_5$, we have $S=\vec{1}$ and the processes $C^n$ and $\wt{C}$ evolve independently of one another. The evolution of $C^n$ is already specified and the evolution of $\wt{C}$ is the same as defined above in the case of $E_2$.

\item The process $(S,C^n,\wt{C})$ does not enter $E_6$. 
\end{itemize}

\begin{remark1}
For as long as $S=\vec{0}$, the transitions between the $E_i$ of $(S,C^n,\wt{C})$ follow the cycle $E_1\to E_2\to (E_3\cup E_4)\to E_1$. Precisely one of $E_3$ and $E_4$ is visited in each such cycle. Once $(S,C^n,\wt{C})$ has entered $E_5$ (which happens as soon as $S=\vec{1}$), it never leaves. 
\end{remark1}

By comparing each case in turn, it can be seen that the evolution specified for $C^n$ matches that given in Section \ref{introsec}. Further, it is clear from the above definition that the {\cadlag} process $(S,C^n,\wt{C})$ is strongly Markov with respect to its generated filtration $(\mc{F}_t)$. Note that this filtration is the product of the filtration of $\mc{Z}^n$ and the additional randomness introduced above (i.e.~$\wt{C}$, $U_{a,b}$, $L_{a,b}$ and so on). With mild abuse of notation, we extend our probability measure $\P$ to be a measure on $\sigma(\cup_{t\in[0,\infty)}\mc{F}_t)$.

\begin{lemma1}\label{CwtC}
The processes $\wt{C}$ and $C$ have the same distribution.
\end{lemma1}
\begin{proof}
When $(S,C^n,\wt{C})\in E_2\cup E_4\cup E_5$, the evolution of $\wt{C}$ defined above is trivially the same as that given in Definition \ref{Cdef}. If $(S,C^n,\wt{C})\in E_1$, then $\wt{C}$ jumps at rate $k$ (corresponding to the next $t^{\oplus}$ that is a growth). On such a jump at time, say $\alpha$, the jump causes a displacement $L$ with distribution (conditional on $\alpha$) given by $l\mapsto v_l(\alpha)$. If $(S,C^n,\wt{C})\in E_3$ then $\wt{C}=1$ and in this case jumps of $\wt{C}$ occur at rate one, with the jump distribution $l\mapsto v_l(\alpha)$. 

Thus, in all cases $\wt{C}$ has the same jump rates and jump distributions as $C$. Since the paths of $C$ and $\wt{C}$ are characterized entirely by their jump times and corresponding displacements, $C$ and $\wt{C}$ have equal distribution.
\end{proof}

\begin{remark1}
The process $\wt{C}$ clearly depends on $n$. However, it follows from Lemma \ref{CwtC} that the distribution of $\wt{C}$ does not depend on $n$. It is for this reason that we choose not to add a superscript $n$ onto $U_{a,b}$, $L_{a,b}$, etc.
\end{remark1}

We now aim to show that, given $T\in(0,\infty)$, $K$ and $n$ can be chosen so that $\P\l[\tau\leq T\r]$ is arbitrarily small. Our first step, which will allow us to make use of \eqref{paintbox}, \eqref{paintbox2}, \eqref{Cninit} and \eqref{Cinit}, is the following lemma.

{\highlight
\begin{lemma1}\label{paintboxbound}
Let $(x_n)_{n=1}^\infty,(x'_n)_{n=1}^\infty\sw[0,1]$ be random sequences such that $\sum_n x_n=\sum x'_n=1$. Let $U\in [0,1]$ be a uniform random variable on $[0,1]$ which is independent of $(x_n)$ and $(x'_n)$. Define $c,c'\in\N$ by
\begin{equation}\label{cdef}
\sum\limits_{k=1}^c x_k\leq U < \sum\limits_{k=1}^{c+1}x'_k,\hspace{2pc}\sum\limits_{k=1}^{c'} x'_k\leq U < \sum\limits_{k=1}^{c'+1}x'_k.
\end{equation}
Suppose that $\eta>0$ is such that
\begin{equation}\label{xconds}
\P\l[\exists k\leq K, |x_k-x'_k|\geq \frac{\eta}{K^2}\r]\leq \frac{\eta}{K}, \hspace{2pc} \P\l[\sum\limits_{k>K}x_k\leq \eta\r]=1.
\end{equation}
Then $\P\l[c=c'\r]\geq 1-6\eta.$
\end{lemma1}
\begin{proof}
We note that
$$\sum_{k>K} x'_k=\sum_{k>K} x_k + \sum_{k=1}^K (x_k-x'_k)$$
so that (in similar style to \eqref{TRupg2}) from \eqref{xconds} we have
\begin{equation}\label{xconds2}
\P\l[\sum_{k>K}x'_k>2\eta\r]\leq \eta.
\end{equation}
Writing $s_k=\sum_{l=1}^k x_k$ and $s'_k=\sum_{l=1}^k x'_k$, from \eqref{cdef} we have that
\begin{equation}\label{cc'}
\{c\neq c'\}\sw\l\{U\geq \sum\limits_{k=1}^K x_k\r\}\cup \l\{U\geq \sum\limits_{k=1}^K x'_k\r\}\cup \l(\bigcup\limits_{k=1}^K\l\{U\in\l[\min\l(s_k,s'_k\r),\max\l(s_k,s'_k\r)\r]\r\}\r).
\end{equation}
Using that
$$|\max(s_k,s'_k)-\min(s_k,s'_k)|\leq \sum\limits_{l=1}^k|x_k-x'_k|\leq \sum\limits_{l=1}^K|x_k-x'_k|$$
in \eqref{cc'}, along with \eqref{xconds} and \eqref{xconds2}, we obtain 
\begin{align*}
\P\l[c\neq c'\r]\leq \eta + 3\eta + \frac{\eta}{K} + \sum\limits_{k=1}^K\frac{\eta}{K}\leq 6\eta
\end{align*}
as required.
\end{proof}
}

Let $\epsilon>0$, let $T\in(0,\infty)$. Let $J\in\N$ be large enough that
\begin{equation}\label{Jdef}
\P\l[I_{J+1,0}\leq T\r]<\epsilon.
\end{equation}
Let $\delta>0$ be such that
\begin{equation}\label{deltadef}
\delta J\leq \epsilon.
\end{equation}
Note that it is trivial that such a $J$ exists, since $C^n$ spends an exponential time of rate $1$ at state $1$ upon each return. We now choose the value of $K$. First, using the definition of $E$ and Assumption \ref{crit}, let $K,N\in\N$ be large enough that, for all $n\geq N$,
\begin{align}
d(1,K)&\leq\epsilon,\label{K1}\\
K\lambda_nT&\leq\epsilon,\label{K2}\\
\frac{K^2J}{N}&\leq \epsilon\label{K3}
\end{align}
Then, by Theorem \ref{TRthmupg}, combined with Lemmas \ref{bigdiefast1} and \ref{bigdiefast2} (the latter of which applies by Lemma \ref{CwtC}), we may increase $K$ and $N$ so that for all $n \ge N$,
\begin{align}
\P\l[\exists t\in[\delta,T]\text{ such that }\inf\limits_{s\in[t-\delta,t]}C^n_s>K\r]&\leq\frac{\epsilon}{J},\label{K4}\\
\P\l[\exists t\in[\delta,T]\text{ such that }\inf\limits_{s\in[t-\delta,t]}\wt{C}_s>K\r]&\leq\frac{\epsilon}{J},\label{K5}\\
\P\l[\sup\limits_{l\in\N}\sup\limits_{s\in[0,T]}|v^n_l(t)-v_l(t)|>\frac{\epsilon}{K^3J}\r]&\leq\frac{\epsilon}{K^2J}.\label{K6}
\end{align}
Using Dini's Theorem, we may increase $K$ so that also
\begin{align}
\sup\limits_{s\in[0,T]}\sum\limits_{k>K}v_k(s)<\epsilon.\label{K7}
\end{align} 
Finally we may increase $N$ if necessary to ensure that inequalities \eqref{K1}--\eqref{K7} hold simultaneously.
\begin{lemma1}\label{statebitlemma}
If $S_t=\vec{0}$ then $d_E(C^n_t,\wt{C}_t)\leq\epsilon$.
\end{lemma1}
\begin{proof}
This follows immediately from \eqref{K1}, the definition of $(S,C^n,\wt{C})$ and the definition of $d_E$.
\end{proof}

\begin{lemma1}\label{tauT}
It holds that $\P\l[\tau\leq T\r]\geq 1-22\epsilon$.
\end{lemma1}
\begin{proof}
Using the definition of the tagged vertex $p$, \eqref{Cninit}, \eqref{Cinit}, \eqref{K6} and \eqref{K7}, we can apply Lemma \ref{paintboxbound} to $C^n_0$ and $C$ (with $\eta=\epsilon$, $x_k=v_k(0)$ and $x'_k=v_k^n(0)$) and obtain that 
\begin{equation}\label{B1}
\P\l[C^n_0\neq\wt{C}_0\r]\leq 6\epsilon.
\end{equation}
In view of the above equation and Lemma \ref{statebitlemma}, in order to prove the current lemma we must control the probabilities of $S$ exiting state $\vec{0}$ during time $[0,T]$. This exit can occur in several different ways, as can be seen from the definition of $(S,C^n,\wt{C})$. We go through each possible case (in the same order as they occur within the definition of $(S,C^n,\wt{C}$)) and establish a bound on the probability of each. Let us first examine the transitions out of $E_1$ that can lead to $S=\vec{1}$.
\begin{itemize}
\item
When $C^n$ makes a jump from a state in $\{1,\ldots,K\}$ at time $I_{a,b}$, the probability that this jump has $L_{a,b}\neq \wt{L}_{a,b}$ can be bounded above using Lemma \ref{paintboxbound}. Using, \eqref{paintbox}, \eqref{paintbox2}, \eqref{K6} and \eqref{K7} (with $\eta=\frac{\epsilon}{KJ}$), we obtain that this probability is bounded above by $\frac{6\epsilon}{JK}$. Now, by \eqref{Jdef}, with probability at least $1-\epsilon$ the process $C^n$ exits state $1$ at most $J$ times during $[0,T]$. On this event, there can be at most $JK$ jumps of $C^n$ from states in $\{1,\ldots,K\}$ during $[0,T]$. Hence, 
\begin{equation}\label{B2}
\P\l[\exists\,a,b\text{ such that }I_{a,b}\leq T\text{ and }C_{I_{a,b}-} \le K\text{ and }L_{a,b}\neq \wt{L}_{a,b}\r]\leq \epsilon+KJ\frac{6\epsilon}{JK}=7\epsilon.
\end{equation}

\item
When $C^n$ makes a jump from a state in $\{1,\ldots,K\}$ at time $I_{a,b}$, the probability that this jump has $j_{a,b}\in \mc{C}^n_{I_{a,b}}$ is equal to the probability that a uniform random element of $\{1,\ldots n\}$ is within $\{1,\ldots K\}$, which is itself $\frac{K}{n}$. As in the above case, by \eqref{Jdef}, with probability at least $1-\epsilon$ there are at most $JK$ such jumps. Hence, by \eqref{K3}
\begin{equation}\label{B3}
\P\l[\exists\,a,b\text{ such that }I_{a,b}\leq T\text{ and }j_{a,b}\in\mc{C}^n_{I_{a,b}}\r]\leq\epsilon+JK\frac{K}{N}\leq 2\epsilon.
\end{equation}

\item 
Given that $C^n=k$, the rate at which $\mc{C}^n$ burns is $k\lambda_n$. Hence, the probability that $\mc{C}^n$ sees a fire at some time $t\in[0,T]$ for which $C^n_t\leq K$ is bounded above by $1-e^{K\lambda_n T}$. Hence, by \eqref{K2} we have that
\begin{equation}\label{B4}
\P\l[\exists t\in[0,T]\text{ such that }t^\oplus\text{ is a fire and }C^n_{t}\leq K\r]\leq \epsilon.
\end{equation}

\end{itemize}
There are no transitions out of $E_2$ that can lead to $S_t=\vec{1}$. We now move on to the transitions out of $E_3\cup E_4$ that can lead to $S=\vec{1}$. 
\begin{itemize}
\item
The only possible transition out of $E_3$ which leads to $S=\vec{1}$ is if $\wt{C}$ makes a jump. This occurs at rate $1$. By \eqref{K4} combined with \eqref{Jdef}, with probability $1-2\epsilon$, the total time spent in $E_3$ is at most $J\delta$. Thus, using also \eqref{deltadef} we have
\begin{equation}\label{B6}
\P\l[\exists t\in[0,T]\text{ such that }(S_t,C^n_t,\wt{C}_t)\in E_3\text{ and }\wt{C}\text{ jumps during }[t,t^\oplus]\r]\leq 2\epsilon +(1-e^{-J\delta})\leq 3\epsilon.
\end{equation}

\item
The only possible transition out of $E_4$ which leads to $S=\vec{1}$ is if $C^n$ makes a jump. This is essentially the same case as $E_3$ (see above), except that the roles of $C^n$ and $\wt{C}$ are reversed. We apply the same argument as is used above, with \eqref{K5} replacing \ref{K4}, to obtain
\begin{equation}\label{B7}
\P\l[\exists t\in[0,T]\text{ such that }(S_t,C^n_t,\wt{C}_t)\in E_4\text{ and }C^n\text{ jumps before }\wt{C}\r]\leq 3\epsilon.
\end{equation}
\end{itemize}

Since $S=\vec{1}$ within $E_5$, and $E_6$ is never visited, there are no more jumps in which the value of $S$ can change from $\vec{0}$ to $\vec{1}$. Summing up our error terms in \eqref{B1}-\eqref{B7}, we obtain the required result.
\end{proof}

\begin{remark1}
It is clear from the proof that of Lemma \ref{tauT} that, during $[0,\tau)$, the process $(S,C^n,\wt{C})$ spends most of its time in $E_1$, during which $C^n=\wt{C}$.
\end{remark1}

\begin{proof}[Of Theorem \ref{CntoC}.]
By Lemma \ref{CwtC}, the $E\times E$ valued process $(C^n,\wt{C})$ is a coupling of $C^n$ and $C$. By Lemmas \ref{statebitlemma} and \ref{tauT} we have
$\Prob[\sup_{t\leq T}d_E(\wt{C},C^n)>\epsilon] \leq \Prob\left[\tau\geq T\right] \leq 22\epsilon$ 
for all $n\geq N$.
\end{proof}

\begin{ack}
The authors 
thank Bal\'{a}zs R\'{a}th and an anonymous
 referee for providing a number of corrections and suggestions to improve the presentation
 and shorten some of the proofs. BT acknowledges support from the Hungarian Scientific Research Fund (OTKA, grants no.~100473 and 109684), and the Leverhulme Trust (International Network grant ``Laplacians, Random Walks, Quantum Spin Systems'').
\end{ack}

\bibliographystyle{plainnat}

\begin{thebibliography}{23}
\providecommand{\natexlab}[1]{#1}
\providecommand{\url}[1]{\texttt{#1}}
\expandafter\ifx\csname urlstyle\endcsname\relax
  \providecommand{\doi}[1]{doi: #1}\else
  \providecommand{\doi}{doi: \begingroup \urlstyle{rm}\Url}\fi

\bibitem[Ahlberg et~al.(2013)Ahlberg, Duminil-Copin, Kozma, and
  Sidoravicius]{ADK2013}
D.~Ahlberg, H.~Duminil-Copin, G.~Kozma, and V.~Sidoravicius.
\newblock Seven-dimensional forest fires.
\newblock \emph{To appear in Ann. Probab.}
\newblock \emph{arXiv:1302.6872v1}, 8 pages, 2013.

\bibitem[Aldous(1987)]{A1987}
D.~Aldous.
\newblock The percolation process on a tree where infinite clusters are frozen.
\newblock \emph{Math. Proc. Cambridge Philos. Soc.}, 128:\penalty0 465--477, 1987.

\bibitem[Aldous(1999)]{A1999}
D.~Aldous.
\newblock Deterministic and stochastic models for coalescence (aggregation and
  coagulation): a review of the mean-field theory for probabilists.
\newblock \emph{Bernoulli}, 5\penalty0 (1):\penalty0 3--48, 1999.

\bibitem[van den Berg and Brouwer(2004)]{BB2004}
J.~van~den Berg and R.~Brouwer.
\newblock Self-destructive percolation.
\newblock \emph{Random Structures and Algorithms}, 24\penalty0 (4):\penalty0
  480--501, 2004.

\bibitem[van den Berg and Brouwer(2006)]{BB2006}
J.~van~den Berg and R.~Brouwer.
\newblock Self-organized forest fires near critical time.
\newblock \emph{Comm. Math. Phys.}, 267:\penalty0 265--277, 2006.

\bibitem[van den Berg and T\'{o}th(2001)]{BT2001}
J.~van~den Berg and B.~T\'{o}th.
\newblock A signal-recovery system: asymptotic properties and construction of
  an infinite-volume limit.
\newblock \emph{Stochastic Process. Appl.}, 96:\penalty0 177--190, 2001.

\bibitem[Bertoin(2011)]{B2011}
J.~Bertoin.
\newblock Burning cars in parking lots.
\newblock \emph{Comm. Math. Phys.}, 306:\penalty0 261--290, 2011.

\bibitem[Bressaud and Fournier(2010)]{BF2010}
X.~Bressaud and N.~Fournier.
\newblock Asymptotics of one-dimensional forest fire processes.
\newblock \emph{Ann. Probab.}, 28\penalty0 (5):\penalty0 1783--1816, 2010.

\bibitem[Bressaud and Fournier(2012)]{BF2012}
X.~Bressaud and N.~Fournier.
\newblock Mean-field forest-fire models and pruning of random trees.
\newblock \emph{arXiv:1201.6645v1}, 45 pages, 2012.

\bibitem[Deaconu et~al.(2002)Deaconu, Fournier, and Tanr\'{e}]{DFT2002}
M.~Deaconu, N.~Fournier, and E.~Tanr\'{e}.
\newblock A pure jump {M}arkov process associated with {S}moluchowski's
  coagulation equation.
\newblock \emph{Ann. Probab.}, 30\penalty0 (4):\penalty0 1763--1796, 2002.

\bibitem[Drossel and Schwabl(1992)]{DS1992}
B.~Drossel and F.~Schwabl.
\newblock Self-organized critical forest fires model.
\newblock \emph{Phys. Rev. Lett.}, 69:\penalty0 1629--1632, 1992.

\bibitem[D\"{u}rre(2006{\natexlab{a}})]{D2006}
M.~D\"{u}rre.
\newblock Existence of multi-dimensional infinite volume self-organized critical forest-fire models.
\newblock \emph{Electron. J. Probab.}, 11:\penalty0 513--539,
  2006{\natexlab{a}}.

\bibitem[D\"{u}rre(2006{\natexlab{b}})]{D2006a}
M.~D\"{u}rre.
\newblock Uniqueness of multi-dimensional infinite volume self-organized critical forest-fire models.
\newblock \emph{Electron. Comm. Probab.}, 11:\penalty0 304--315,
  2006{\natexlab{b}}.

\bibitem[D\"{u}rre(2009)]{D2009a}
M.~D\"{u}rre.
\newblock \emph{Self-organized {C}ritical {P}henomena; {F}orest {F}ire and {S}andpile Model}.
\newblock PhD thesis, Ludwig-Maximilians-Universit{\"{a}}t M{\"{u}}nchen, 2009.

\bibitem[Graf(2014{\natexlab{a}})]{G2014}
R.~Graf.
\newblock Self-destructive percolation as a limit of forest-fire models on regular rooted trees.
\newblock \emph{arXiv:1404.0325v1}, 2014{\natexlab{a}}.

\bibitem[Graf(2014{\natexlab{b}})]{G2014a}
R.~Graf.
\newblock A forest-fire model on the upper half-plane.
\newblock \emph{Electron. J. Probab.}, 19\penalty0 (8):\penalty0 1--27, 2014{\natexlab{b}}.

\bibitem[Kiss et~al.(2013)Kiss, Manolescu, and Sidoravicius]{KMS2013}
D.~Kiss, I.~Manolescu, and V.~Sidoravicius.
\newblock Planar lattices do not recover from forest fires.
\newblock \emph{To appear in Ann. Probab.}
\newblock \emph{arXiv:1312.7004v1}, 2013.

\bibitem[Norris(1999)]{N1999}
J.~Norris.
\newblock Smoluchowski's coagulation equation: Uniqueness, nonuniqueness and a
  hydrodyanmic limit for the stochastic coalescent.
\newblock \emph{Ann. Appl. Probab.}, 9\penalty0 (1):\penalty0 78--109, 1999.

\bibitem[Norris(2000)]{N2000}
J.~Norris.
\newblock Cluster coagulation.
\newblock \emph{Comm. Math. Phys.}, 209:\penalty0 407--435, 2000.

\bibitem[Preussner(2012)]{P2012}
G.~Preussner.
\newblock \emph{Self-{O}rganized {C}riticality - {T}heory, {M}odels and
  {C}haracterisation}.
\newblock Cambridge University Press, 2012.

\bibitem[R\'{a}th(2009)]{R2009}
B.~R\'{a}th.
\newblock Mean field frozen percolation.
\newblock \emph{J. Stat. Phys.}, 137:\penalty0 459--499, 2009.

\bibitem[R{\'{a}}th and T{\'{o}}th(2009)]{RT2009}
B.~R{\'{a}}th and B.~T{\'{o}}th.
\newblock Erd{\H{o}}s-{R}{\'{e}}nyi random graphs + forest fires = self-organized criticality.
\newblock \emph{Electron. J. Probab.}, 14:\penalty0 1290--1327, 2009.

\bibitem[Schenk et~al.(2002)Schenk, Drossel, and Schwabl]{SDS2002}
K.~Schenk, B.~Drossel, and Schwabl.
\newblock The self-organized critical forest-fire model on large scales.
\newblock \emph{Physical Review E.}, 65:\penalty0 026135, 2002.

\bibitem[Stahl(2012)]{S2012}
J.~Stahl.
\newblock Existence of a stationary distribution for multi-dimensional
  infinite volume forest-fire processes.
\newblock \emph{arXiv:1203.5533v1}, 2012.

\end{thebibliography}

\end{document}